\definecolor{ao(english)}{rgb}{0.0, 0.5, 0.0}
\numberwithin{equation}{section}
\newtheorem{theorem}{Theorem}[section]
\theoremstyle{plain}
\newtheorem{lemma}[theorem]{Lemma}
\theoremstyle{plain}
\theoremstyle{plain}
\newtheorem{definition}[theorem]{Definition}
\theoremstyle{definition}
\newtheorem{remark}[theorem]{Remark}
\newcommand{\N}{{\mathbb N}}
\newcommand{\R}{{\mathbb R}}
\newcommand{\eps}{\varepsilon}
\newcommand{\beq}{\begin{equation}}
\newcommand{\eeq}{\end{equation}}
\renewcommand{\le}{\leqslant}
\renewcommand{\ge}{\geqslant}
\newcommand{\w}{W^{s,p}_0(\Omega)}
\newcommand{\fpl}{(-\Delta)_p^s\,}
\newcommand{\ds}{{\rm d}_\Omega^s}
\newcommand{\leqnomode}{\tagsleft@true}
\newcommand{\reqnomode}{\tagsleft@false}
\newenvironment{enumroman}{\begin{enumerate}

}{\end{enumerate}}
\title[Extremal solutions for the fractional $p$-Laplacian]{Extremal constant sign solutions and nodal solutions for the fractional $p$-Laplacian}
\author[S.\ Frassu, A.\ Iannizzotto]{Silvia Frassu, Antonio Iannizzotto}
\address[S.\ Frassu]{Department of Mathematics and Computer Science
\newline\indent
University of Cagliari
\newline\indent
Viale L. Merello 92, 09123 Cagliari, Italy}
\email{silvia.frassu@unica.it}
\address[A.\ Iannizzotto]{Department of Mathematics and Computer Science
\newline\indent
University of Cagliari
\newline\indent
Viale L. Merello 92, 09123 Cagliari, Italy}
\email{antonio.iannizzotto@unica.it}
\subjclass[2010]{35A15, 35R11, 58E05.}
\keywords{Fractional $p$-Laplacian, extremal constant sign solutions, nodal solutions, critical point theory.}
\begin{document}

\begin{abstract}
We study a pseudo-differential equation driven by the degenerate fractional $p$-Laplacian, under Dirichlet type conditions in a smooth domain. First we show that the solution set within the order interval given by a sub-supersolution pair is nonempty, directed, and compact, hence endowed with extremal elements. Then, we prove existence of a smallest positive, a biggest negative and a nodal solution, combining variational methods with truncation techniques.
\end{abstract}

\maketitle

\begin{center}
Version of \today\
\end{center}

\section{Introduction}\label{sec1}

%nonlinear BVP: multiplicity, extremal, nodal solutions
%problem
%ref on linear case
%ref on nonlinear case

\noindent
In the study of nonlinear boundary value problem, one classical issue is that about the sign of solutions, especially in the case of multiple solutions. Typically, constant sign solutions can be detected as critical points of a truncated energy functional by direct minimization or min-max methods, while the existence of a {\em nodal} (i.e., sign-changing) solution is a more delicate question (some classical results, based on Morse theory, can be found in \cite{AL,BL,ZL}). An interesting approach was proposed in \cite{DD} for the Dirichlet problem driven by the Laplacian operator: it consists in proving that the problem admits a {\em smallest} positive and a {\em biggest} negative solution, plus a third nontrivial solution lying between the two, which must then be nodal. The method used for finding the nodal solution is based on the Fu\v{c}ik spectrum. Such approach was then extended to the $p$-Laplacian in \cite{CP}, and then combined with a variational characterization of the second eigenvalue to detect a nodal solution under more general assumptions in \cite{FP} (see also \cite{GP,MMP1} and the monograph \cite{MMP}).
\vskip2pt
\noindent
Recently, many authors have devoted their attention to nonlinear equations driven by nonlocal operators. The present paper is devoted to the study of the following Dirichlet-type problem for a nonlinear fractional equation:
\beq\label{p}
\begin{cases}
\fpl u = f(x,u)& \text{in $\Omega$} \\
u=0 & \text{in $\Omega^c$,}
\end{cases}
\eeq
where $\Omega\subset\R^N$ ($N>1$) is a bounded domain with $C^{1,1}$ boundary, $p \ge 2$, $s\in(0,1)$, $N>ps$, and $\fpl$ denotes the fractional $p$-Laplacian, namely the nonlinear, nonlocal operator defined for all $u:\R^N\to\R$ smooth enough and all $x\in\R^N$ by
\beq\label{fpl}
\fpl u(x)=2\lim_{\eps\to 0^+}\int_{B_\eps^c(x)}\frac{|u(x)-u(y)|^{p-2}(u(x)-u(y))}{|x-y|^{N+ps}}\,dy
\eeq
(which in the linear case $p=2$ reduces to the fractional Laplacian up to a dimensional constant $C(N,p,s)>0$, see \cite{CS, CS1, DPV}). The reaction $f:\Omega\times\R\to\R$ is a Carath\'eodory mapping subject to a subcritical growth condition.
\vskip2pt
\noindent
Problem \eqref{p} has been intensively studied in the recent literature, both in the semilinear and the nonlinear case. Regarding the semilinear case, we recall the fine regularity results of \cite{RS}, the existence and multiplicity results obtained for instance in \cite{DI,FP2,IMS,U}, and the study on extremal solutions in \cite{RS1} (see also the monograph \cite{MRS}). The nonlinear case is obviously more involved: spectral properties of $\fpl$ were studied in \cite{BP,FP1,FRS,HS,LL}, a detailed regularity theory was developed in \cite{BL1,IMS1,IMS2,KMS,KMS1} (some results about Sobolev and H\"older regularity being only proved for the degenerate case $p>2$), maximum and comparison principles have appeared in \cite{DQ,J}, while existence and multiplicity of solutions have been obtained for instance in \cite{CMS,DQ1,FRS,ILPS,XZR} (see also the surveys \cite{MS,P}). For the purposes of the present study, we recall in particular \cite{IMS3}, where it was proved that the local minimizers of the energy functional corresponding to problem \eqref{p} in the topologies of $\w$ and of the weighted H\"older space $C^0_s(\overline\Omega)$, respectively, coincide (namely, a nonlinear fractional analogue of the classical result of \cite{BN}).
\vskip2pt
\noindent
Here we focus on the structure of the set $\mathcal{S}(\underline{u},\overline{u})$, namely the set of solutions of \eqref{p} lying within the interval $[\underline{u},\overline{u}]$ where $\underline{u}$ and $\overline{u}$ are a subsolution and a supersolution of \eqref{p}, respectively, with $\underline{u}\le\overline{u}$ in $\Omega$. We shall prove that $\mathcal{S}(\underline{u},\overline{u})$ is nonempty, directed, and compact in $\w$, hence endowed with extremal elements.
\vskip2pt
\noindent
Then, we will assume that $f(x,\cdot)$ is $(p-1)$-sublinear at infinity and asymptotically linear near the origin without resonance on the first eigenvalue, and prove that \eqref{p} has a smallest positive solution $u_+$ and a biggest negative solution $u_-$. Finally, under more restrictive assumptions on the behavior of $f(x,\cdot)$ near the origin, we will prove existence of a nodal solution $\tilde u$ s.t.\ $u_-\le\tilde u\le u_+$ in $\Omega$, thus extending some results of \cite{CP,FP} to the fractional $p$-Laplacian.
\vskip2pt
\noindent
We remark that our results are new (to our knowledge) even in the semilinear case $p=2$, and that the structure of the set $\mathcal{S}(\underline{u},\overline{u})$ can provide valuable information about extremal solutions also in different frameworks.
\vskip2pt
\noindent
The paper has the following structure: in Section \ref{sec2} we collect the necessary preliminaries; in Section \ref{sec3} we study the properties of the solution set; in Section \ref{sec4} we show existence of extremal constant sign solutions; and in Section \ref{sec5} we prove existence of a nontrivial nodal solution.
\vskip4pt
\noindent
{\bf Notation:} Throughout the paper, for any $A\subset\R^N$ we shall set $A^c=\R^N\setminus A$. For any two measurable functions $f,g:\Omega\to\R$, $f\le g$ will mean that $f(x)\le g(x)$ for a.e.\ $x\in\Omega$ (and similar expressions). The positive (resp., negative) part of $f$ is denoted $f^+$ (resp., $f^-$). If $X$ is an ordered Banach space, then $X_+$ will denote its non-negative order cone. For all $r\in[1,\infty]$, $\|\cdot\|_r$ denotes the standard norm of $L^r(\Omega)$ (or $L^r(\R^N)$, which will be clear from the context). Every function $u$ defined in $\Omega$ will be identified with its $0$-extension to $\R^N$. Moreover, $C$ will denote a positive constant (whose value may change case by case).

\section{Preliminaries}\label{sec2}

\noindent
In this section we collect some useful results related to the fractional $p$-Laplacian. First we fix a functional-analytical framework, following \cite{DPV, ILPS}. First, for all measurable $u:\R^N\to\R$ we set
\[[u]_{s,p}^p=\iint_{\R^N\times\R^N}\frac{|u(x)-u(y)|^p}{|x-y|^{N+ps}}\,d\mu,\]
where $d\mu=|x-y|^{-N-ps}\,dx\,dy$. Then we define the following fractional Sobolev spaces:
\[W^{s,p}(\R^N)=\big\{u\in L^p(\R^N):\,[u]_{s,p}<\infty\big\},\]
\[\w=\big\{u\in W^{s,p}(\R^N):\,u(x)=0 \ \text{in $\Omega^c$}\big\},\]
the latter being a uniformly convex, separable Banach space with norm $\|u\|_{s,p}=[u]_{s,p}$ and dual $W^{-s,p'}(\Omega)$ (with norm $\|\cdot\|_{-s,p'})$. Set $p_s^*= Np / (N-ps)$, then the embedding $\w\hookrightarrow L^q(\Omega)$ is continuous for all $q\in[1,p^*_s]$ and compact for all $q\in[1,p^*_s)$, with embedding constant $c_q>0$.
\vskip2pt
\noindent
We denote $\widetilde{W}^{s,p}(\Omega)$ the space of all $u \in L_{\rm loc}^p(\R^N)$ s.t.\ $u \in W^{s,p}(U)$ for some open $U \subseteq \R^N$, $\overline\Omega\subset U$, and
\[\int_{\R^N} \frac{|u(x)|^{p-1}}{(1+|x|)^{N+ps}}\,dx < \infty .\]
Clearly, $\w \subset \widetilde{W}^{s,p}(\Omega)$. By \cite[Lemma 2.3]{IMS1}, for any $u \in \widetilde{W}^{s,p}(\Omega)$ we can define $\fpl u \in W^{-s,p'}(\Omega)$ by setting for all $v\in\w$
\[\langle \fpl u, v\rangle = \iint_{\R^N\times\R^N} |u(x)-u(y)|^{p-1}(u(x)-u(y))(v(x)-v(y)) \,d\mu.\]
 The definition above agrees with \eqref{fpl} when $u$ lies in the Schwartz space of $C^\infty$, rapidly decaying functions in $\R^N$. In the next lemma we recall some useful properties of $\fpl$ in $\w$:

\begin{lemma} \label{S+}
$\fpl: \w \rightarrow W^{-s,p'}(\Omega)$ is a monotone, continuous, $(S)_+$-operator.
\end{lemma}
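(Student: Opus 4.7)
The plan is to verify the three properties of $\fpl:\w\to W^{-s,p'}(\Omega)$ separately, exploiting elementary pointwise inequalities for the $p$-power map together with the measure-theoretic setup of the excerpt. Throughout, I will use the shorthand $D u(x,y)=u(x)-u(y)$ so that the dual pairing becomes
\[
\langle \fpl u,v\rangle=\iint_{\R^N\times\R^N}|Du|^{p-2}Du\,Dv\,d\mu,
\]
and the $\w$-norm to the $p$-th power is $\|Du\|_{L^p(d\mu)}^p$. The idea is to transfer known $L^p(d\mu)$ facts to the nonlocal operator via this rewriting.

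\textbf{Monotonicity.} I would compute $\langle \fpl u-\fpl v,u-v\rangle$ pointwise in $(x,y)$: the integrand equals $(|a|^{p-2}a-|b|^{p-2}b)(a-b)$ with $a=Du(x,y)$, $b=Dv(x,y)$. Since $t\mapsto|t|^{p-2}t$ is monotone on $\R$, this is $\ge 0$ a.e.\ with respect to $\mu$, and the conclusion follows by integration. (For $p\ge 2$ one actually has the stronger bound $(|a|^{p-2}a-|b|^{p-2}b)(a-b)\ge c_p|a-b|^p$, which will be needed in the $(S)_+$ step below.)

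\textbf{Continuity.} Let $u_n\to u$ in $\w$; I want $\fpl u_n\to\fpl u$ in the dual. By H\"older's inequality with exponents $p'$ and $p$ applied in $L^q(d\mu)$,
\[
|\langle \fpl u_n-\fpl u,v\rangle|\le \Big\||Du_n|^{p-2}Du_n-|Du|^{p-2}Du\Big\|_{L^{p'}(d\mu)}\,\|Dv\|_{L^p(d\mu)},
\]
and the last factor is $\|v\|_{s,p}$. It therefore suffices to show that the Nemytskii map $\Phi(t)=|t|^{p-2}t$ sends $Du_n\to Du$ in $L^p(\R^{2N},d\mu)$ to $\Phi(Du_n)\to\Phi(Du)$ in $L^{p'}(\R^{2N},d\mu)$. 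This is standard: $\Phi$ is continuous on $\R$ and satisfies $|\Phi(t)|\le|t|^{p-1}$, so the general continuity theorem for Nemytskii operators on Lebesgue spaces (applied with respect to the $\sigma$-finite measure $\mu$) yields the required convergence, and taking the supremum over $\|v\|_{s,p}\le 1$ gives continuity of $\fpl$.

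\textbf{$(S)_+$-property.} Assume $u_n\rightharpoonup u$ in $\w$ and $\limsup_n\langle \fpl u_n,u_n-u\rangle\le 0$. Writing
\[
\langle \fpl u_n-\fpl u,u_n-u\rangle=\langle \fpl u_n,u_n-u\rangle-\langle \fpl u,u_n-u\rangle,
\]
the second summand tends to $0$ by weak convergence (since $\fpl u\in W^{-s,p'}(\Omega)$), and monotonicity forces the left-hand side to be $\ge 0$; hence $\langle \fpl u_n-\fpl u,u_n-u\rangle\to 0$. Invoking the sharper $p\ge 2$ inequality mentioned in the monotonicity step with $a=Du_n$, $b=Du$, I get
\[
c_p\iint_{\R^N\times\R^N}|Du_n-Du|^p\,d\mu\le \langle \fpl u_n-\fpl u,u_n-u\rangle\to 0,
\]
i.e.\ $\|u_n-u\|_{s,p}^p\to 0$, which is strong convergence in $\w$. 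The only step requiring some care is the pointwise inequality $(|a|^{p-2}a-|b|^{p-2}b)(a-b)\ge c_p|a-b|^p$; this is the classical Simon-type inequality for $p\ge 2$, so I expect no real obstacle, but it is the linchpin that makes the $(S)_+$-property work and deserves an explicit reference.
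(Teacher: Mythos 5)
Your proposal is correct, and it takes a genuinely different route from the paper in two of the three steps. For monotonicity, the paper simply invokes \cite[Lemma 2.3]{IMS3}, while you argue it pointwise from the monotonicity of $t\mapsto|t|^{p-2}t$; these are the same idea, just with the citation expanded. For continuity, the paper observes that $\fpl$ is the G\^ateaux derivative of the $C^1$ functional $u\mapsto\|u\|_{s,p}^p/p$, whereas you run a Nemytskii-operator argument: $Du_n\to Du$ in $L^p(d\mu)$ implies $|Du_n|^{p-2}Du_n\to|Du|^{p-2}Du$ in $L^{p'}(d\mu)$, then H\"older. Both are sound (the measure $\mu$ is $\sigma$-finite, so Krasnosel'skii's continuity theorem applies); your version is more self-contained, the paper's is shorter. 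The real divergence is in the $(S)_+$ step. The paper first deduces $\|u_n\|_{s,p}\to\|u\|_{s,p}$ via the algebraic identity $(\|u_n\|_{s,p}^{p-1}-\|u\|_{s,p}^{p-1})(\|u_n\|_{s,p}-\|u\|_{s,p})\le\langle\fpl u_n-\fpl u,u_n-u\rangle$ (the line in the printed proof has a typo with repeated $u_n$'s, but this is what is meant), and then invokes uniform convexity (Radon--Riesz) to upgrade weak to strong convergence; this works for every $p>1$. You instead apply the Simon inequality $(|a|^{p-2}a-|b|^{p-2}b)(a-b)\ge c_p|a-b|^p$, valid for $p\ge2$, which directly gives $c_p\|u_n-u\|_{s,p}^p\le\langle\fpl u_n-\fpl u,u_n-u\rangle\to0$. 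This is cleaner and avoids uniform convexity, at the cost of using $p\ge2$, which is anyway a standing hypothesis of the paper; but note the paper's closing remark claims the arguments extend to $p\in(1,2)$ with minor adjustments, and for that range your Simon inequality fails in this form, while the paper's norm-plus-uniform-convexity route survives unchanged. Your closing decomposition $\langle\fpl u_n-\fpl u,u_n-u\rangle=\langle\fpl u_n,u_n-u\rangle-\langle\fpl u,u_n-u\rangle$, with the second term vanishing by weak convergence and the sum squeezed to zero via monotonicity, is correct and is exactly the standard mechanism.
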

\begin{proof}
By \cite[Lemma 2.3]{IMS3} (with $q=1$) we have for all $u,v \in \w$
\[ \langle \fpl u-\fpl v, u-v\rangle \ge 0,\]
hence $\fpl$ is monotone. Plus, $\fpl$ is continuous as the G\^ateaux derivative of the $C^1$-functional $u \mapsto \frac{\|u\|_{s,p}^p}{p}$.
Finally, if $u_n \rightharpoonup u$ in $\w$ and
\[\limsup_n \langle \fpl u_n, u_n-u\rangle \le 0,\]
then for all $n \in \N$ we have
\begin{align*}
&(\|u_n\|_{s,p}^{p-1} - \|u_n\|_{s,p}^{p-1}) (\|u\|_{s,p} - \|u\|_{s,p}) = \|u_n\|_{s,p}^p - \|u_n\|_{s,p}^{p-1} \|u\|_{s,p} - \|u_n\|_{s,p} \|u\|_{s,p}^{p-1}+\|u\|_{s,p}^p\\
& \leq \langle \fpl u_n,u_n\rangle - \langle \fpl u_n,u\rangle - \langle \fpl u, u_n\rangle + \langle \fpl u, u\rangle \\
& = \langle \fpl u_n, u_n - u\rangle + \langle \fpl u, u - u_n\rangle \leq {\bf o}(1),
\end{align*}
hence $\|u_n\|_{s,p} \rightarrow \|u\|_{s,p}$. By uniform convexity of $\w$, $u_n \rightarrow u$ in $\w$. Therefore, $\fpl$ is an $(S)_+$-operator.
 \end{proof}

\noindent
Now we introduce basic hypothesis on the reaction $f$:
\begin{itemize}[leftmargin=1cm]
\item[${\bf H}_0$] $f:\Omega\times\R\to\R$ is a Carath\'{e}odory function s.t.\ for a.e.\ $x\in\Omega$ and all $t\in\R$ 
\[|f(x,t)| \leq c_0(1+|t|^{q-1}) \qquad (c_0 >0, q\in(p,p_s^*))\]
\end{itemize}

\noindent 
We recall some definitions:

\begin{definition}\label{ss}
Let $u \in \widetilde{W}^{s,p}(\Omega)$:
\begin{enumroman}
\item $u$ is a supersolution of \eqref{p} if $u \ge 0$ in $\Omega^c$ and for all $v \in \w_+$
\[\langle \fpl u,v\rangle \ge \int_{\Omega} f(x,u)v\,dx;\]
\item $u$ is a subsolution of \eqref{p} if $u \le 0$ in $\Omega^c$ and for all $v \in \w_+$
\[\langle \fpl u,v\rangle \le \int_{\Omega} f(x,u)v\,dx.\]
\end{enumroman}
\end{definition}

\noindent
We say that $(\underline{u}, \overline{u}) \in \widetilde{W}^{s,p}(\Omega) \times \widetilde{W}^{s,p}(\Omega)$ is a {\em sub-supersolution pair} of \eqref{p}, if $\underline{u}$ is a subsolution, $\overline{u}$ is a supersolution, and $\underline{u} \le \overline{u}$ in $\Omega$.

\begin{definition}\label{sol}
$u \in \w$ is a solution of \eqref{p} if for all $v \in \w$
\[\langle \fpl u,v\rangle =  \int_{\Omega} f(x,u)v\,dx .\]
 \end{definition}
 
\noindent
Clearly, $u\in\w$ is a solution of \eqref{p} iff it is both a supersolution and a subsolution. Sub-, supersolutions, and solutions of similar problems will be meant in the same sense as in Definitions \ref{ss}, \ref{sol} above. 
\vskip2pt
\noindent
We will need the following a priori bound for solutions of \eqref{p}:

\begin{lemma}\label{bound}
{\rm\cite[Lemma 2.1]{IMS3}} Let ${\bf H}_0$ hold, $u \in \w$ be a solution of \eqref{p}. Then, $u \in L^{\infty}(\Omega)$ with $\|u\|_{\infty} \le C$, for some $C=C(\|u\|_{s,p})>0$.
\end{lemma}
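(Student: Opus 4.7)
The plan is to establish the $L^\infty$ bound via a Moser-type iteration scheme adapted to the nonlocal nonlinear setting, the fundamental input being the subcritical growth exponent $q<p_s^*$ in ${\bf H}_0$. The starting point is the continuous Sobolev embedding $\w\hookrightarrow L^{p_s^*}(\Omega)$, which gives a quantitative bound on $\|u\|_{p_s^*}$ in terms of $\|u\|_{s,p}$.

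First, I would focus on $u^+$, the estimate for $u^-$ being perfectly symmetric. For a truncation parameter $M>0$ and an iteration exponent $\beta\ge 0$, set $w:=\min\{u^+,M\}$ and test the weak formulation against the admissible function $v:=u\,w^{\beta p}\in\w$. The crucial algebraic ingredient is the nonlocal pointwise inequality
\[
|a-b|^{p-2}(a-b)\bigl(a\,\phi(a)-b\,\phi(b)\bigr) \ge c_{p}\bigl|\Phi(a)-\Phi(b)\bigr|^p,
\]
valid whenever $\phi$ is nondecreasing and $\Phi$ is an antiderivative of $(t\phi'(t)+\phi(t))^{1/p}$. Specializing this to $\phi(t):=w(t)^{\beta p}$ produces a coercive estimate bounding $[w^{\beta+1}]_{s,p}^p$ from above by $C(\beta+1)^p\,\langle\fpl u,\,u\,w^{\beta p}\rangle$.

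Invoking ${\bf H}_0$ and the Sobolev embedding applied to $w^{\beta+1}$ then yields the recursive inequality
\[
\|w^{\beta+1}\|_{p_s^*}^p \le C(\beta+1)^p \int_\Omega \bigl(1+|u|^{q-1}\bigr)|u|\,w^{\beta p}\,dx,
\]
with $C$ independent of $M$ and $\beta$. Letting $M\to\infty$ by monotone convergence and splitting the right-hand side via H\"older, using $q<p_s^*$ together with the a priori control on $\|u\|_{p_s^*}$, one derives a bootstrap of the form $\|u^+\|_{\vartheta r}\le \bigl(C(\beta+1)^a\bigr)^{1/(\beta+1)}\|u^+\|_r$ for some $\vartheta>1$ fixed and some $a>0$. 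Iterating with $r_n=\vartheta^n r_0$ and the associated sequence $\beta_n\to\infty$ geometrically, both $\sum_n 1/(\beta_n+1)$ and $\sum_n \log(\beta_n+1)/(\beta_n+1)$ converge, so the iterated product remains finite and one obtains $\|u^+\|_\infty\le C(\|u\|_{s,p})$; the symmetric argument for $u^-$ concludes.

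The main obstacle, beyond standard bookkeeping, is the absence of a genuine chain rule for $\fpl$: each passage from $[w^{\beta+1}]_{s,p}$ to $\langle\fpl u,\,u\,w^{\beta p}\rangle$ rests entirely on the nonlocal pointwise inequality above, and the explicit dependence of its constant on $\beta$ must be tracked carefully in order for the Moser product to converge. A secondary delicate point is the initialization of the iteration: the starting integrability $u\in L^{p_s^*}$ coming from $\|u\|_{s,p}$ is precisely what the subcritical assumption $q<p_s^*$ requires in order to absorb the $|u|^{q-1}$ contribution from $f$ at the first step.
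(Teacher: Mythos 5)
The paper does not actually prove this lemma --- it is quoted verbatim from \cite[Lemma 2.1]{IMS3}, so there is no in-text argument to compare against. Judged on its own merits, your Moser-iteration scheme is a legitimate and self-contained route to the bound, and it is the standard strategy in this setting (the same flavour of argument appears in Franzina--Palatucci and Brasco--Parini for the fractional $p$-Laplacian). The plan is essentially right: the nonlocal algebraic inequality you invoke (for a nondecreasing $g$, $J_p(a-b)\,(g(a)-g(b))\ge|G(a)-G(b)|^p$ with $G'=(g')^{1/p}$, applied here with $g(t)=t\,\phi(t)$) is the correct substitute for the chain rule, the truncation $w=\min\{u^+,M\}$ keeps the test function admissible, the exponent gap $q<p_s^*$ gives a geometric gain $\vartheta=(p_s^*-q+p)/p>1$ in the recursion, and the polynomial $\beta$-dependence of the constants makes the infinite Moser product converge, yielding $\|u\|_\infty\le C(\|u\|_{s,p})$.

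Two points deserve tightening if you write this out in full. First, your claimed factor $C(\beta+1)^p$ is harmless but not sharp: with $\phi(t)=w(t)^{\beta p}$ the relevant antiderivative is $\Phi(t)=\tfrac{(\beta p+1)^{1/p}}{\beta+1}\,t^{\beta+1}$ on $[0,M]$, which produces a factor of order $(\beta+1)^{p-1}$ rather than $(\beta+1)^p$; the iteration converges either way, but you should state the constant you actually get. Second, $\Phi$ is only a power of $w$ on $\{0\le t\le M\}$ (it grows linearly above $M$ and vanishes below $0$), so the passage from $[\Phi(u)]_{s,p}$ to $[w^{\beta+1}]_{s,p}$ needs the one-line observation that $|\Phi(a)-\Phi(b)|\gtrsim(\beta+1)^{-1}|w(a)^{\beta+1}-w(b)^{\beta+1}|$ for all $a,b$ (trivial when both arguments leave $[0,M]$, immediate otherwise). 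With these small repairs the argument is complete. Note, finally, that the cited source may instead use a De~Giorgi/Stampacchia truncation $(u-k)^+$ and a level-set recursion; both schemes are in common use for this operator and give the same conclusion with the same dependence on $\|u\|_{s,p}$, so even if the details of your proof differ from \cite{IMS3}, the approach is not out of line with the literature.
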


\noindent
We define weighted H\"{o}lder-type spaces with weight $\ds(x)= \mathrm{dist}(x, \Omega^c)^s$, along with their norms:
\[C_s^0(\overline{\Omega})= \Big\{u \in C^0(\overline{\Omega}): \frac{u}{\ds} \in C^0(\overline{\Omega}) \Big\},
\quad \|u\|_{0,s}= \Big\|\frac{u}{\ds}\Big\|_{\infty},\]
and for all $\alpha \in (0,1)$
\[C_s^{\alpha}(\overline{\Omega})= \Big\{u \in C^0(\overline{\Omega}): \frac{u}{\ds} \in C^{\alpha}(\overline{\Omega})\Big\},
\quad 
\|u\|_{\alpha,s}= \|u\|_{0,s} + \sup_{x \neq y} \frac{|u(x)/\ds(x) - u(y)/\ds(y)|}{|x-y|^{\alpha}}.\]
The embedding $C_s^{\alpha}(\overline{\Omega}) \hookrightarrow C_s^0(\overline{\Omega})$ is compact for all
$\alpha \in (0,1)$. Unlike in $\w$, the positive cone $C_s^0(\overline{\Omega})_+$ of $C_s^0(\overline{\Omega})$ has a nonempty interior given by
\begin{equation}\label{int}
\mathrm{int}(C_s^0(\overline{\Omega})_+)= \Big\{u \in C_s^0(\overline{\Omega}):\, \frac{u(x)}{\ds(x)} > 0 \text{ in } \overline{\Omega}\Big\}
\end{equation}
(see \cite[Lemma 5.1]{ILPS}).
Consider the following Dirichlet problem, with right-hand side $g\in L^\infty(\Omega)$:
\beq\label{p1}
\begin{cases}
\fpl u = g(x)& \text{in $\Omega$} \\
u=0 & \text{in $\Omega^c$.}
\end{cases}
\eeq
We have the following regularity result:

\begin{lemma}\label{reg}
{\rm\cite[Theorem 1.1]{IMS2}} Let $g \in L^{\infty}(\Omega)$, $u \in \w$ be a solution of \eqref{p1}. Then, $u \in C_s^{\alpha}(\overline{\Omega})$ with $\|u\|_{\alpha, s} \le C \|g\|_{\infty}^{\frac{1}{p-1}}$, for some $\alpha \in (0,s]$, $C=C(\Omega) >0$.
\end{lemma}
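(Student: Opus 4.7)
The plan is to reduce the statement to the unit-source case by the natural scaling of $\fpl$, and then to combine interior H\"older theory with a boundary barrier analysis that produces the weighted estimate.

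By the $(p-1)$-homogeneity of $\fpl$, I would first rescale, setting $\tilde u=\|g\|_\infty^{-1/(p-1)}u$, so that $\tilde u$ solves $\fpl\tilde u=\tilde g$ with $\|\tilde g\|_\infty\le 1$. It is then enough to prove $\|\tilde u\|_{\alpha,s}\le C(\Omega)$, and undoing the scaling at the end restores the $\|g\|_\infty^{1/(p-1)}$ factor. A preliminary uniform $L^\infty$ bound $\|\tilde u\|_\infty\le C$ is already furnished by Lemma \ref{bound}, so the heart of the argument is an oscillation estimate.

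Second, I would invoke (or reproduce) the interior H\"older regularity for the degenerate fractional $p$-Laplacian with bounded right-hand side: on every ball $B_r(x_0)$ with $2r<\ds(x_0)$, one obtains $[\tilde u]_{C^\alpha(B_r(x_0))}\le C r^{-\alpha}$ for some $\alpha\in(0,s]$ depending only on $N,p,s$, with $C$ independent of $x_0$. This controls the interior part of the weighted seminorm once we understand the boundary behavior of $\tilde u$.

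The main obstacle is the boundary analysis. I would construct explicit barriers based on the distance power $\ds$: since $\Omega$ is $C^{1,1}$, a delicate pointwise evaluation of the singular integral in \eqref{fpl} yields $|\fpl(\ds)|\in L^\infty$ in a tubular neighborhood of $\partial\Omega$. Coupling this with the monotonicity of $\fpl$ from Lemma \ref{S+} and the weak comparison principle, one deduces $|\tilde u(x)|\le C\ds(x)$ in $\Omega$, so that $\tilde u/\ds\in L^\infty(\Omega)$ with a universal bound.

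Finally, to upgrade the pointwise control $|\tilde u|\le C\ds$ to H\"older regularity of $\tilde u/\ds$ up to $\partial\Omega$, I would perform a dyadic rescaling around each boundary point: on annuli at scale $r\sim\ds(x)$, the function $\tilde u$ can be renormalised to a solution of a fractional $p$-Laplace equation on a unit-size domain, where the interior estimate applies uniformly. Summing the oscillation contributions over the dyadic scales produces the H\"older estimate on $\tilde u/\ds$ up to the boundary, and patching with the interior estimate gives the global weighted bound. The truly delicate point is the pointwise analysis of $\fpl(\ds)$ in a merely $C^{1,1}$ domain, since nonlinearity and nonlocality interact in a way that makes the barrier computation substantially harder than in the linear or local settings.
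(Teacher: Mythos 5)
The paper does not prove Lemma~\ref{reg}; it is quoted directly from \cite[Theorem 1.1]{IMS2} and used as a black box, so there is no in-paper argument against which to compare your sketch. Given that, your outline is a fair reconstruction of the strategy followed in the cited work and its precursors (\cite{IMS1}, and \cite{RS} in the linear case): reduction to $\|g\|_\infty\le 1$ via the $(p-1)$-homogeneity of $\fpl$, interior H\"older estimates, a barrier comparison with $\ds$ exploiting boundedness of $\fpl\ds$ near $\partial\Omega$ in a $C^{1,1}$ domain, and a boundary rescaling argument.

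The one substantive caveat is your final step. Knowing $|u|\le C\ds$ together with interior H\"older estimates and a dyadic decomposition does \emph{not}, by itself, yield H\"older continuity of $u/\ds$ up to $\partial\Omega$: the interior estimate controls the oscillation of $u$ on balls at a safe distance from $\partial\Omega$, but says nothing about the oscillation of $u/\ds$ on balls touching the boundary, which is what the weighted seminorm requires. Closing this requires a quantitative oscillation-decay (boundary Harnack, or growth-lemma) statement for solutions with bounded source, showing that $u/\ds$ stabilises at a geometric rate as one approaches $\partial\Omega$. That statement is the real content of \cite{IMS2} and does not follow formally from the three ingredients you list, so the phrase ``summing the oscillation contributions over the dyadic scales'' is hiding the hardest part of the proof rather than describing a routine patching step.
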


\noindent
Combining Lemmas \ref{bound}, \ref{reg} we see that any solution of $\eqref{p}$ under ${\bf H}_0$ lies in $C_s^{\alpha}(\overline{\Omega})$, with a uniform estimate on the $C_s^{\alpha}(\overline{\Omega})$-norm. In the final part of our study, we will follow a variational approach. We define an energy functional for problem \eqref{p} by setting for all $(x,t)\in\Omega\times\R$
\[F(x,t)=\int_0^t f(x,\tau)\,d\tau,\]
and for all $u\in \w$
\[\Phi(u)= \frac{\|u\|_{s,p}^p}{p} -  \int_{\Omega} F(x,u)\,dx.\]
By ${\bf H}_0$, it is easily seen that $\Phi \in C^1(\w)$ and the solutions of \eqref{p} coincide with the critical points of $\Phi$. We will need the following equivalence result for local minimizers of $\Phi$ in $\w$ and in $C_s^0(\overline\Omega)$:

\begin{lemma}\label{svh}
{\rm \cite[Theorem 1.1]{IMS3}} Let ${\bf H}_0$ hold, $u\in\w$. Then, the following are equivalent:
\begin{enumroman}
\item\label{svh1} there exists $\rho>0$ s.t.\ $\Phi(u+v)\ge\Phi(u)$ for all $v\in\w$, $\|v\|_{s,p}\le\rho$;
\item\label{svh2} there exists $\sigma>0$ s.t.\ $\Phi(u+v)\ge\Phi(u)$ for all $v\in\w\cap C_s^0(\overline\Omega)$, $\|v\|_{0,s}\le\sigma$.
\end{enumroman}
\end{lemma}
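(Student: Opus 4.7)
The plan is to follow the celebrated Brezis--Nirenberg equivalence \cite{BN}, adapted from the local setting to the fractional one by exploiting the global weighted H\"older regularity of Lemma \ref{reg}.

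For $(i) \Rightarrow (ii)$, I would argue by contradiction: suppose there is a sequence $v_n \in \w \cap C_s^0(\overline\Omega)$ with $\|v_n\|_{0,s} \to 0$ and $\Phi(u+v_n) < \Phi(u)$. The pointwise bound $|v_n(x)| \le \|v_n\|_{0,s}\,\ds(x)$ together with the boundedness of $\ds$ forces $\|v_n\|_\infty \to 0$, so ${\bf H}_0$ and dominated convergence give $\int_\Omega [F(x,u+v_n) - F(x,u)]\,dx \to 0$. Combined with $\Phi(u+v_n)<\Phi(u)$, this yields $\limsup_n \|u+v_n\|_{s,p} \le \|u\|_{s,p}$, so $\{v_n\}$ is bounded in $\w$. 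Reflexivity, the compact embedding $\w \hookrightarrow L^q(\Omega)$, and the a.e.\ vanishing of $v_n$ force $v_n \rightharpoonup 0$ in $\w$ along a subsequence; weak lower semicontinuity of the norm and the upper bound then promote this to $\|u+v_n\|_{s,p} \to \|u\|_{s,p}$, and since $p\ge 2$ the uniform convexity of $\w$ upgrades weak to strong convergence. Hence $v_n \to 0$ in $\w$, contradicting (i).

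The substantive direction $(ii) \Rightarrow (i)$ is the fractional Brezis--Nirenberg step. If $u$ is not a $\w$-local minimizer, then for every small $\rho>0$ the infimum $m_\rho := \inf\{\Phi(u+v) : \|v\|_{s,p} \le \rho\}$ is strictly less than $\Phi(u)$ and is attained (by weak l.s.c.\ of $\Phi$, combining ${\bf H}_0$ with the compactness of the Sobolev embedding) at some $w_\rho \ne 0$. The Lagrange multiplier rule applied to the ball constraint yields $\mu_\rho \ge 0$ such that
\[\fpl(u+w_\rho) + \mu_\rho \fpl w_\rho = f(x,u+w_\rho) \quad \text{in } W^{-s,p'}(\Omega).\]
The plan is to recast this equation in the form \eqref{p1} with an $L^\infty$-bounded right-hand side, apply Lemma \ref{bound} to obtain a uniform $L^\infty$-bound for $w_\rho$, and then invoke Lemma \ref{reg} to derive a uniform $C^\alpha_s$-bound. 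Since $\|w_\rho\|_{s,p}\le\rho\to 0$, the compact embedding $C^\alpha_s(\overline\Omega)\hookrightarrow C^0_s(\overline\Omega)$ then forces $\|w_\rho\|_{0,s}\to 0$; for $\rho$ small enough this gives $\|w_\rho\|_{0,s} \le \sigma$ and $\Phi(u+w_\rho) < \Phi(u)$, contradicting (ii).

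The main obstacle I foresee is the technical step of controlling the multiplier $\mu_\rho$ and deriving the uniform $L^\infty$-bound for $w_\rho$: because $\fpl$ is nonlinear and nonlocal, the displayed equation cannot be rearranged into the form \eqref{p1} by purely algebraic manipulations, and one likely needs a Moser-type iteration tailored to the fractional $p$-Laplacian in the spirit of \cite{IMS1, IMS2}, together with an argument that $\mu_\rho$ does not accumulate at a value where the principal part degenerates. Once uniform $L^\infty$-control is secured, the $C^\alpha_s$ step is a direct invocation of Lemma \ref{reg} and the conclusion follows.
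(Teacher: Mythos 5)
The paper invokes this lemma as a black box from \cite[Theorem 1.1]{IMS3} and provides no proof of its own, so there is no internal argument to compare against; I assess your sketch on its own merits.

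Your argument for (i) $\Rightarrow$ (ii) is complete and correct. The pointwise bound $|v_n| \le \|v_n\|_{0,s}\,\ds$ gives $\|v_n\|_\infty\to 0$; the estimate $|F(\cdot,u+v_n)-F(\cdot,u)|\le C\|v_n\|_\infty\big(1+(|u|+C)^{q-1}\big)$ from ${\bf H}_0$ controls the potential term; and the chain weak lower semicontinuity $\Rightarrow$ norm convergence $\Rightarrow$ Radon--Riesz (uniform convexity) upgrades $v_n\rightharpoonup 0$ to $v_n\to 0$ in $\w$, contradicting (i). No gaps there.

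For (ii) $\Rightarrow$ (i), your outline follows the constrained-minimization route of the local $p$-Laplacian Brezis--Nirenberg theorem and correctly identifies the crux, but it does not resolve it. The Euler--Lagrange equation
\[\fpl(u+w_\rho) + \mu_\rho \fpl w_\rho = f(x,u+w_\rho) \quad \text{in } W^{-s,p'}(\Omega)\]
is genuinely not of the form \eqref{p1}: since $\fpl$ is nonlinear, the two terms on the left cannot be merged into $\fpl$ of any single function (in contrast to the case $p=2$, where $(-\Delta)^s(u+w)+\mu(-\Delta)^s w=(1+\mu)(-\Delta)^s w+(-\Delta)^s u$). Consequently Lemma \ref{bound} (stated only for $\fpl v=f(\cdot,v)$) and Lemma \ref{reg} (stated only for $\fpl v=g$, $g\in L^\infty(\Omega)$) do not apply as written, and the plan to ``recast this equation in the form \eqref{p1}'' fails. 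What is actually required is (a) a bound on $\mu_\rho$ as $\rho\to 0^+$, (b) an $L^\infty$ a priori estimate for the two-operator equation, uniform in $\mu$ on bounded intervals, and (c) a weighted H\"older estimate for the same two-operator equation, again uniform in $\mu$. These three steps constitute the substantive content of \cite[Theorem 1.1]{IMS3}, and they require redoing the De Giorgi--Nash--Moser machinery of \cite{IMS1,IMS2} for the combined operator $v\mapsto\fpl(u+v)+\mu\fpl v$. You flag this honestly as ``the main obstacle,'' but as written the (ii) $\Rightarrow$ (i) direction is a roadmap with the engine missing, not a proof of the lemma.
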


\noindent
Since we are mainly interested in constant sign solutions, we will need a strong maximum principle and Hopf's lemma. Consider the problem
\beq\label{p2}
\begin{cases}
\fpl u = -c(x) |u|^{p-2}u & \text{in $\Omega$} \\
u=0 & \text{in $\Omega^c$,}
\end{cases}
\eeq
with $c \in C^0(\overline{\Omega})_+$. Then we have the following:

\begin{lemma} \label{DQ}
{\rm\cite[Theorem 1.5]{DQ}} Let $c \in C^0(\overline{\Omega})_+$, $u \in \widetilde{W}^{s,p}(\Omega)_+ \setminus \{0\}$ be a supersolution of \eqref{p2}. Then, $u >0$ in $\Omega$ and for any 
$x_0 \in \partial \Omega$
\[\liminf_{\Omega \ni x \to x_0} \frac{u(x)}{\ds(x)} > 0.\]
\end{lemma}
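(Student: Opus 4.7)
The statement has two components --- pointwise positivity of $u$ in $\Omega$, and the lower bound for $u/\ds$ at each boundary point --- which I would establish separately via nonlocal comparison.

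For the strong maximum principle I would argue by contradiction. Suppose $u(x_0)=0$ at some $x_0\in\Omega$; by local boundedness of $u$ (obtained from a De Giorgi iteration on the weak supersolution inequality) combined with Lemma \ref{reg} applied to the truncated equation $\fpl u=-c(x)|u|^{p-2}u$, the function $u$ is locally H\"older continuous in $\Omega$, so $x_0$ is a genuine pointwise minimum of the globally nonnegative $u$. Testing the weak supersolution inequality against a sequence of nonnegative bumps concentrated at $x_0$ and exploiting $u(x_0)=0$, $u\ge 0$ on $\R^N$, and $u\not\equiv 0$, the limiting inequality reduces to $\fpl u(x_0)+c(x_0)\,u(x_0)^{p-1}\ge 0$. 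But from \eqref{fpl}
\[\fpl u(x_0)=-2\int_{\R^N}\frac{u(y)^{p-1}}{|x_0-y|^{N+ps}}\,dy<0\]
(the integral being finite thanks to the H\"older vanishing of $u$ at $x_0$ together with the tail condition in $\widetilde{W}^{s,p}(\Omega)$), while the zero-order term vanishes; a contradiction. Hence $u>0$ pointwise in $\Omega$.

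For the Hopf-type estimate, fix $x_0\in\partial\Omega$ and, using the $C^{1,1}$ regularity of $\partial\Omega$, pick an interior ball $B_R(y_0)\subset\Omega$ tangent to $\partial\Omega$ at $x_0$; from the positivity just proved and continuity, $m:=\inf_{\overline{B_{R/2}(y_0)}}u>0$. I would then produce a barrier $\psi\in\w$, supported in $\overline{B_R(y_0)}$, that is a weak subsolution of \eqref{p2} in the annulus $B_R(y_0)\setminus\overline{B_{R/2}(y_0)}$, bounded above by $m$ on $\overline{B_{R/2}(y_0)}$, and satisfies $\psi(x)\ge\eta\,\ds(x)$ in a neighborhood of $x_0$ for some $\eta>0$. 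A natural construction is to take $\psi$ as the solution of the Dirichlet problem with $\fpl\psi=-K$ in the annulus (for a sufficiently large constant $K>0$, fixed self-consistently so as to dominate $\|c\|_\infty\|\psi\|_\infty^{p-1}$), with exterior data $m$ on $\overline{B_{R/2}(y_0)}$ and $0$ on $B_R(y_0)^c$; the sharp lower bound $\psi\gtrsim\ds$ at $x_0$ is then extracted by comparing $\psi$ with explicit power-type test functions of the form $\kappa[(R^2-|\cdot-y_0|^2)_+]^s$, whose $\fpl$ can be estimated directly from \eqref{fpl} by splitting the integration near and far from the singularity. The weak comparison principle --- a consequence of the monotonicity of $\fpl$ from Lemma \ref{S+} together with $c\ge 0$ --- then yields $u\ge\psi$ in $\Omega$, from which the asserted $\liminf$ bound follows at once.

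The main obstacle is the construction and sharp boundary analysis of the barrier $\psi$: unlike the semilinear case $p=2$, where closed-form Riesz integral formulas on a ball yield the exact value of $(-\Delta)^s$ for explicit test functions, in the degenerate nonlinear regime one has to estimate $\fpl$ of power-type functions by hand through \eqref{fpl}, uniformly controlling both near-singular and far-field contributions. A subsidiary technical point is justifying the passage from the weak to the pointwise inequality $\fpl u(x_0)<0$ in the maximum principle step, which relies on enough H\"older regularity of $u$ near $x_0$ to make the singular integral converge; this is ensured by the interior regularity theory of the fractional $p$-Laplacian cited before Lemma \ref{DQ}.
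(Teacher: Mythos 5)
This lemma is not proved in the paper: it is quoted verbatim from \cite[Theorem 1.5]{DQ}, so there is no in-paper argument to compare your sketch against. That said, your two-step structure --- a strong maximum principle obtained by contradiction at an interior zero, followed by a Hopf estimate via a barrier in an interior tangent ball plus weak comparison --- is indeed the standard architecture used in \cite{DQ} and in the related literature (Iannizzotto--Mosconi--Squassina, Jarohs), so your road map is the right one.

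There are, however, two genuine gaps. First, the passage from the weak supersolution inequality to the pointwise statement $\fpl u(x_0)+c(x_0)u(x_0)^{p-1}\ge 0$ is not a matter of ``testing with bumps.'' The quantity $\langle\fpl u,\varphi\rangle$ is a symmetric double integral, not a smeared pointwise evaluation, and one can only identify it with $\int_\Omega\fpl u\,\varphi\,dx$ when the principal value in \eqref{fpl} exists as an $L^1_{\rm loc}$ function. Near a zero of $u\in C^\alpha_{\rm loc}$ the integrand of $\fpl u(x_0)$ behaves like $|y-x_0|^{\alpha(p-1)-N-ps}$, which is integrable only if $\alpha(p-1)>ps$; the interior regularity available for the degenerate fractional $p$-Laplacian sits precisely at this borderline and does not automatically clear it. Rigorous proofs of the strong minimum principle in the weak setting instead go through a logarithmic/expansion-of-positivity lemma in the spirit of Di Castro--Kuusi--Palatucci, or through the (nontrivial) equivalence between weak and viscosity supersolutions; invoking one of these is essential, not optional.

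Second, in the Hopf step the conclusion hinges entirely on the barrier lower bound $\psi\gtrsim\ds$ near $x_0$, and this is where all the nonlinear, nonlocal difficulty lives. You correctly identify the candidate comparison function $\kappa\big[(R^2-|\cdot-y_0|^2)_+\big]^s$, but the claim that $\fpl$ of such a power-type profile can be ``estimated directly'' with the right sign and size, uniformly up to the boundary of the ball, is exactly the content that has to be proved; as stated it is an assertion, not an argument. One also needs to justify that the torsion function for $\fpl$ in an annulus with the prescribed mixed exterior data lies above this explicit profile --- a second comparison step that you do not set up. Until these two points are filled in, the write-up is a plausible outline rather than a proof; since the paper itself simply cites \cite{DQ}, the correct move in the context of this manuscript would be to do the same.
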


\noindent
Finally, we recall some spectral properties of $\fpl$(see \cite{DQ1,HS} and \cite[Proposition 3.4]{FRS}).
Let $\rho \in L^{\infty}(\Omega)_+ \setminus \{0\}$ and consider the following weighted eigenvalue problem:
\begin{equation}
\begin{cases}
\fpl u = \lambda \rho(x) |u|^{p-2} u & \text {in $\Omega$}\\
u=0 & \text{on $\Omega^c$.}
\end{cases}
\label{p4}
\end{equation}

\begin{lemma}\label{sp}
Let $\rho \in L^{\infty}(\Omega)_{+} \setminus \{0\}$. Then, \eqref{p4} has an unbounded sequence of variational eigenvalues
\[0 < \lambda_1(\rho) < \lambda_2(\rho) \le \ldots \le \lambda_k(\rho) \le \ldots\]
The first eigenvalue admits the following variational characterization:
\[\lambda_1(\rho)= \inf_{u \in \w \setminus \{0\}} \frac{\|u\|_{s,p}^p}{\int_{\Omega} \rho(x)|u|^p\,dx},\]
and
\begin{enumroman}
\item\label{sp1} $\lambda_1(\rho)>0$ is simple, isolated and attained at an unique positive eigenfunction $\hat{u}_1(\rho) \in \w\cap\mathrm{int}(C_s^0(\overline{\Omega})_+)$ s.t.\ $\int_{\Omega} \rho(x)|\hat{u}_1|^p\,dx=1$;
\item\label{sp2} if $u \in \w \setminus \{0\}$ is an eigenfunction of \eqref{p4} associated to any eigenvalue $\lambda > \lambda_1(\rho)$, then $u$ is nodal;
\item\label{sp3} if $\tilde{\rho} \in L^{\infty}(\Omega)_+ \setminus \{0\}$ is s.t.\ $\tilde{\rho} \le \rho$, $\tilde{\rho} \not\equiv \rho$, then $\lambda_1(\rho) <  \lambda_1 (\tilde{\rho})$.
\end{enumroman}
\end{lemma}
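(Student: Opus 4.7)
The plan is to assemble this spectral result from standard variational and nonlinear spectral theory, in line with the cited references. To obtain the unbounded sequence $\{\lambda_k(\rho)\}$, I would apply Ljusternik--Schnirelman theory to the even $C^1$ functional $u \mapsto \|u\|_{s,p}^p$ restricted to the weighted sphere $M_\rho=\{u\in\w : \int_\Omega \rho|u|^p\,dx = 1\}$. The restriction is coercive and, thanks to the $(S)_+$-property of $\fpl$ (Lemma \ref{S+}), satisfies a Palais--Smale-type condition on $M_\rho$; min-max formulas over symmetric subsets of $\Z_2$-cohomological index at least $k$ then produce the desired unbounded sequence of critical values. The variational formula for $\lambda_1(\rho)$ and attainment follow by direct minimization of the Rayleigh quotient, since the numerator is weakly sequentially lower semicontinuous on $\w$ while the denominator is weakly continuous by compactness of the embedding $\w \hookrightarrow L^p(\Omega)$.

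For (i), since $|u|$ yields the same Rayleigh quotient as $u$, the minimizer $\hat u_1=\hat u_1(\rho)$ can be chosen nonnegative; Lemma \ref{DQ} then forces $\hat u_1>0$ in $\Omega$ with a positive $\ds$-liminf at the boundary, so after invoking Lemma \ref{reg} to get $C^\alpha_s$-regularity, the characterization \eqref{int} places $\hat u_1$ in $\mathrm{int}(C^0_s(\overline\Omega)_+)$. Simplicity is the most delicate point: I would invoke a nonlocal Picone-type inequality, along the lines of the version used in \cite{FRS}, applied to $\hat u_1$ and any other positive eigenfunction $u$ associated to $\lambda_1(\rho)$, which forces $u$ to be proportional to $\hat u_1$. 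Isolation then follows by a standard contradiction: if $\lambda_n\to\lambda_1(\rho)^+$ were distinct eigenvalues with eigenfunctions $u_n$ normalized by $\int_\Omega\rho|u_n|^p\,dx=1$, boundedness in $\w$ together with Lemma \ref{S+} gives, up to a subsequence, $u_n\to u$ strongly in $\w$; Lemma \ref{reg} upgrades the convergence to $C^0_s(\overline\Omega)$, hence $u=\pm\hat u_1$ lies in $\mathrm{int}(C^0_s(\overline\Omega)_+)\cup(-\mathrm{int}(C^0_s(\overline\Omega)_+))$, meaning $u_n$ has constant sign for large $n$, contradicting (ii).

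Property (ii) itself follows from simplicity and the strong maximum principle: if some eigenfunction $u$ associated to $\lambda>\lambda_1(\rho)$ had constant sign, WLOG $u\ge 0$ and $u\neq 0$, then Lemma \ref{DQ} would give $u>0$ in $\Omega$, and the Picone argument applied to $\hat u_1(\rho)$ and $u$ would yield $u\propto\hat u_1(\rho)$, hence $\lambda=\lambda_1(\rho)$, a contradiction. For (iii), the variational characterization directly gives $\lambda_1(\tilde\rho)\ge\lambda_1(\rho)$, since $\int_\Omega\tilde\rho|u|^p\,dx\le\int_\Omega\rho|u|^p\,dx$ for every $u\in\w$; strictness is obtained by testing with $\hat u_1(\tilde\rho)\in\mathrm{int}(C^0_s(\overline\Omega)_+)$, which is pointwise positive in $\Omega$, so $\tilde\rho\not\equiv\rho$ forces $\int_\Omega(\rho-\tilde\rho)|\hat u_1(\tilde\rho)|^p\,dx>0$ and thus
\[
\lambda_1(\rho)\le \frac{\|\hat u_1(\tilde\rho)\|_{s,p}^p}{\int_\Omega\rho\,|\hat u_1(\tilde\rho)|^p\,dx}< \frac{\|\hat u_1(\tilde\rho)\|_{s,p}^p}{\int_\Omega\tilde\rho\,|\hat u_1(\tilde\rho)|^p\,dx}=\lambda_1(\tilde\rho).
\]
The main obstacle throughout is simplicity of $\lambda_1(\rho)$, which hinges on the availability of a nonlocal Picone-type identity adapted to the degenerate $\fpl$; all remaining items are then relatively standard reductions based on the maximum principle (Lemma \ref{DQ}), the regularity upgrade (Lemma \ref{reg}), and the $(S)_+$-property (Lemma \ref{S+}).
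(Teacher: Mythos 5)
The paper does not prove Lemma~\ref{sp}: it is stated as a recall, citing \cite{DQ1,HS} and \cite[Proposition 3.4]{FRS}, so there is no in-house argument to compare against. Your sketch is a sound reconstruction of what those references do, and the overall architecture is right: Ljusternik--Schnirelman min-max over the weighted sphere $M_\rho$ for the full sequence (with the Palais--Smale condition secured by the $(S)_+$-property of Lemma~\ref{S+}); direct minimization of the Rayleigh quotient for $\lambda_1(\rho)$ via compactness of $\w\hookrightarrow L^p(\Omega)$; Lemmas~\ref{bound}, \ref{reg}, \ref{DQ} and \eqref{int} for positivity and $\mathrm{int}(C^0_s(\overline\Omega)_+)$-membership; a discrete Picone inequality for simplicity of $\lambda_1(\rho)$ and for item~\ref{sp2}; and monotonicity of the Rayleigh quotient combined with strict positivity of $\hat u_1(\tilde\rho)$ for item~\ref{sp3}.

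Two remarks, neither fatal. First, you correctly flag that the Picone step is the delicate one, but your sketch passes over the admissibility of the test function $\hat u_1^p/u^{p-1}$: even granting $u,\hat u_1\in\mathrm{int}(C^0_s(\overline\Omega)_+)$, so that the quotient is bounded and comparable to $\ds$, showing that it actually belongs to $\w$ requires an approximation (e.g.\ replace $u$ by $u+\eps$, prove the inequality, then send $\eps\to 0^+$); the cited papers handle this carefully and you should at least mention it. Second, your proof of isolation invokes item~\ref{sp2}, which you establish afterwards from simplicity; the logical order (simplicity $\Rightarrow$ \ref{sp2} $\Rightarrow$ isolation) is correct, but the presentation would be cleaner if reordered to match. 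Also, in the isolation argument, the upgrade from $\w$-convergence to $C^0_s(\overline\Omega)$-convergence needs the uniform $L^\infty$ and $C^\alpha_s$ bounds from Lemmas~\ref{bound} and \ref{reg} (which hold since $\|u_n\|_{s,p}^p=\lambda_n$ is bounded) plus the compact embedding $C^\alpha_s(\overline\Omega)\hookrightarrow C^0_s(\overline\Omega)$; you gesture at this but should say it explicitly.
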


\noindent
When $\rho \equiv 1$ we set $\lambda_1(\rho)=\lambda_1$ and $\hat{u}_1(\rho)=\hat{u}_1$. Moreover, the second (non-weighted) eigenvalue admits the following variational characterization:
\begin{equation}
\lambda_2 = \inf_{\gamma \in \Gamma_1} \max_{t \in [0,1]} \|\gamma(t)\|_{s,p}^p,
\label{e2}
\end{equation}
where
\[\Gamma_1 = \big\{\gamma \in C([0,1],\w):\,\gamma(0)=\hat{u}_1, \gamma(1)=- \hat{u}_1, \|\gamma(t)\|_p=1 \, \text{for all} \ t \in [0,1]\big\},\]
see \cite[Theorem 5.3]{BP}.

\section{Solutions in a sub-supersolution interval}\label{sec3}

\noindent In this section we consider a sub-supersolution pair $(\underline{u}, \overline{u})$ and study the set
\[\mathcal{S}(\underline{u}, \overline{u})=\{u \in \w: u \text{ solves } \eqref{p}, \underline{u} \le u \le \overline{u}\}.\]
On spaces $\w, \widetilde{W}^{s,p}(\Omega)$ we consider the partial pointwise order, inducing a lattice structure. We set $u \land v = \min\{u,v\}$ and $u \vee v =\max\{u,v\}$.
\vskip2pt
\noindent
The first result shows that the pointwise minimum of supersolutions is a supersolution, as well as the maximum of subsolutions is a subsolution (we give the proof in full detail, as it requires some careful calculations):
 
\begin{lemma}\label{Mm}
Let ${\bf H}_0$ hold and $u_1,u_2 \in \widetilde{W}^{s,p}(\Omega)$:
\begin{enumroman}
\item \label{S} if $u_1, u_2$ are supersolutions of \eqref{p}, then so is $u_1 \land u_2$;
\item \label{s} if $u_1, u_2$ are subsolutions of \eqref{p} then so is $u_1 \vee u_2$.
\end{enumroman}
\end{lemma}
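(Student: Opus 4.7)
I will prove (i); (ii) is symmetric (one may either repeat the argument with $\vee$ in place of $\wedge$, or apply (i) to $-u_1,-u_2$ with the reaction $-f(x,-\,\cdot\,)$). Set $w=u_1\wedge u_2$, let $v\in\w_+$ be a test function, and partition $\R^N$ into $A=\{u_1\le u_2\}$ and $B=\R^N\setminus A$, so that $w=u_1$ on $A$ and $w=u_2$ on $B$. Since $u_1,u_2\in\widetilde W^{s,p}(\Omega)$ and are a priori bounded (Lemma~\ref{bound}, Lemma~\ref{reg} whenever applicable), standard lattice properties give $w\in\widetilde W^{s,p}(\Omega)$, so $\fpl w\in W^{-s,p'}(\Omega)$ is well defined.

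The heart of the proof is a pointwise algebraic inequality on $\R^N\times\R^N$. Writing $J(t)=|t|^{p-2}t$ and $v_A=v\chi_A$, $v_B=v\chi_B$, I claim that for a.e.\ $(x,y)\in\R^N\times\R^N$
\[
J(w(x)-w(y))\,(v(x)-v(y))\ge J(u_1(x)-u_1(y))\,(v_A(x)-v_A(y))+J(u_2(x)-u_2(y))\,(v_B(x)-v_B(y)).
\]
This is immediate (with equality) on $A\times A$ and $B\times B$. On $A\times B$ one has $u_1(x)-u_1(y)\le u_1(x)-u_2(y)=w(x)-w(y)\le u_2(x)-u_2(y)$; monotonicity of $J$ then turns the required inequality into $[J(w(x)-w(y))-J(u_1(x)-u_1(y))]v(x)+[J(u_2(x)-u_2(y))-J(w(x)-w(y))]v(y)\ge 0$, which holds because $v\ge 0$. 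The case $B\times A$ is symmetric.

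Integrating against $d\mu$ would give at once $\langle\fpl w,v\rangle\ge\langle\fpl u_1,v_A\rangle+\langle\fpl u_2,v_B\rangle$, but $v_A,v_B\notin\w$. To bypass this, I regularize: reducing to $v\in\w_+\cap L^\infty(\Omega)$ by density, set $\psi_\eps(t)=\min\{t^+/\eps,1\}$ and
\[
\theta_\eps:=v\,\psi_\eps(u_1-u_2)\in\w_+,\qquad v-\theta_\eps=v\,\big(1-\psi_\eps(u_1-u_2)\big)\in\w_+,
\]
which are admissible test functions (the Lipschitz composition preserves $\w$, and boundedness of $v,u_1,u_2$ ensures the estimates). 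The sub-supersolution inequalities for $u_1,u_2$ tested against $v-\theta_\eps$ and $\theta_\eps$ respectively, summed and combined with the pointwise inequality above (now with $v-\theta_\eps, \theta_\eps$ in place of $v_A,v_B$, up to a term that is nonnegative on the mixed quadrants), yield
\[
\langle\fpl w,v\rangle\ \ge\ \int_\Omega f(x,u_1)(v-\theta_\eps)\,dx\,+\,\int_\Omega f(x,u_2)\,\theta_\eps\,dx.
\]
As $\eps\to 0^+$, $\theta_\eps\to v\chi_B$ and $v-\theta_\eps\to v\chi_A$ pointwise, while both are dominated by $v\in L^\infty$; since $f(x,u_i)\in L^\infty(\Omega)$ by ${\bf H}_0$ and Lemma~\ref{bound}, dominated convergence gives $\langle\fpl w,v\rangle\ge\int_A f(x,u_1)v+\int_B f(x,u_2)v=\int_\Omega f(x,w)v\,dx$.

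\paragraph{Main obstacle.} Controlling the nonlocal bilinear form under the truncation limit. The pointwise inequality and the supersolution inequalities are clean, but justifying the passage to the limit is delicate: one has to keep a uniform (in $\eps$) domination of the $d\mu$-integrand when integrating on $A\times B$ and $B\times A$, where the difference quotients $w(x)-w(y)$ and $u_i(x)-u_i(y)$ interact nontrivially. Carrying this out hinges on the $L^\infty$ bound on $u_1,u_2$ and on the fact that $|\theta_\eps(x)-\theta_\eps(y)|\le|v(x)-v(y)|+\mathrm{Lip}(\psi_\eps)\,(|u_1-u_2|(x)-|u_1-u_2|(y))$ type estimates can be used to dominate the cross-kernel by an $L^1(d\mu)$ function, after which dominated convergence applies.
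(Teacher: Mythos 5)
Your pointwise inequality on the partition $A\times A$, $A\times B$, $B\times A$, $B\times B$ is correct and is a clean way to see what the paper's lengthy term-by-term computation is really proving \emph{in the limit}. The gap lies exactly where you expect it, in the passage to the limit, but the fix you sketch does not work, and the reason is instructive.

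Write $\rho=\psi_\eps(u_1-u_2)$, $a_\eps=v(1-\rho)$, $b_\eps=v\rho$. The supersolution inequalities give
\[
\langle\fpl u_1,a_\eps\rangle+\langle\fpl u_2,b_\eps\rangle\ \ge\ \int_\Omega f(x,u_1)\,a_\eps\,dx+\int_\Omega f(x,u_2)\,b_\eps\,dx,
\]
and your plan requires $\langle\fpl w,v\rangle\ge\langle\fpl u_1,a_\eps\rangle+\langle\fpl u_2,b_\eps\rangle$ for each $\eps>0$, claiming this is your pointwise inequality ``up to a nonnegative term on the mixed quadrants.'' That claim is false: for $x\in A$, $y\in B$, a direct computation gives
\[
J(w(x)-w(y))(v(x)-v(y))-\big[J(u_1(x)-u_1(y))(a_\eps(x)-a_\eps(y))+J(u_2(x)-u_2(y))(b_\eps(x)-b_\eps(y))\big]
\]
\[
=\ v(x)\big[J(w(x)-w(y))-J(u_1(x)-u_1(y))\big]+v(y)\big[(1-\rho(y))J(u_1(x)-u_1(y))+\rho(y)J(u_2(x)-u_2(y))-J(w(x)-w(y))\big].
\]
The first bracket is $\ge 0$, but the second interpolates between $J(u_1(x)-u_1(y))-J(w(x)-w(y))\le 0$ (when $\rho(y)$ is near $0$) and $J(u_2(x)-u_2(y))-J(w(x)-w(y))\ge 0$ (when $\rho(y)=1$); by choosing $v(x)$ small and $v(y)$ large the total can be negative. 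So the inequality you need is genuinely only asymptotic, as in the paper's inequality \eqref{lhs} with the ${\bf o}(1)$.

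Moreover, your proposed dominated-convergence repair cannot close this. After expanding, the integrand contains two qualitatively different error terms. One, carrying a factor $1-\rho(x)$ (or $1-\rho(y)$) on the quadrant where $\rho\to 1$, does vanish by dominated convergence; these are the paper's terms $({\rm E}),({\rm F}),({\rm G})$. The other is the analogue of the paper's term $({\rm B})$: on $B\times B$ one gets a factor $\rho(x)-\rho(y)$ against $[J(u_2(x)-u_2(y))-J(u_1(x)-u_1(y))]\,v(y)$. You cannot dominate $\rho(x)-\rho(y)$ uniformly in $\eps$, since $\mathrm{Lip}(\psi_\eps)=1/\eps$; the estimate in your ``Main obstacle'' paragraph blows up. The paper handles this term not by convergence at all, but by a sign observation: on $B\times B$,
\[
u_1(x)-u_1(y)\ge u_2(x)-u_2(y)\ \Longleftrightarrow\ (u_1-u_2)(x)\ge(u_1-u_2)(y)\ \Longleftrightarrow\ \rho(x)\ge\rho(y),
\]
and monotonicity of $J$ then makes the $({\rm B})$-type integrand have the correct sign for every $\eps$, so that it can simply be discarded. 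This combination, a sign argument for the $\rho(x)-\rho(y)$ term and dominated convergence for the $1-\rho$ terms, is the missing piece; a pure dominated-convergence scheme does not reach the conclusion.

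Secondary points: the partition should use a strict inequality on one side (the paper takes $A_1=\{u_1<u_2\}$ and $A_2=A_1^c$) so that the two pieces are disjoint with $w=u_1$ on one and $w=u_2$ on the other; and there is no need to invoke Lemmas \ref{bound} and \ref{reg} here, since $u_1,u_2$ are merely sub/supersolutions in $\widetilde W^{s,p}(\Omega)$ and need not be bounded (the paper works with $\varphi\in C_c^\infty(\Omega)_+$ and then concludes by density, avoiding any $L^\infty$ assumption on the $u_i$).
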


\begin{proof}
We prove \ref{S}. We have for $i=1,2$
\begin{equation}
\begin{cases}
\langle \fpl u_i , v\rangle \ge \int_{\Omega} f(x,u_i)v\,dx & \text {for all $v \in \w_+$}\\
u_i \ge 0 & \text{in $\Omega^c$.}
\end{cases}
\label{p5}
\end{equation}
Set $u=u_1 \land u_2 \in \widetilde{W}^{s,p}(\Omega)$ (by the lattice structure of $\widetilde{W}^{s,p}(\Omega)$), then $u\ge 0$ in $\Omega^c$. Set also
\[A_1=\{x \in \R^N:\,u_1(x) < u_2(x)\}, \quad A_2= A_1^c.\]
Now fix $\varphi \in C_c^{\infty}(\Omega)_+$, $\eps >0$, and set for all $t\in \R$
\[\tau_{\eps}(t)=
\begin{cases}
0 \ & \text{ if } t \le 0 \\
\displaystyle\frac{t}{\eps} \ & \text{ if } 0< t < \eps\\
1 & \text{ if } t \ge \eps.
\end{cases}
\]
The mapping $\tau_{\eps}: \R \to \R$ is Lipschitz continuous, nondecreasing, and $0\le\tau_\eps(t)\le 1$ for all $t\in\R$, and clearly
 \[\tau_{\eps}(u_2-u_1) \rightarrow \chi_{A_1}, \qquad  1-\tau_{\eps}(u_2-u_1) \rightarrow \chi_{A_2}\]
a.e.\ in $\R^N$, as $\eps \to 0^+$, with dominated convergence. Testing \eqref{p5} with $\tau_{\eps}(u_2-u_1)\varphi, (1-\tau_{\eps}(u_2-u_1))\varphi \in \w_+$ for $i=1,2$ respectively, we get
\begin{align}
&\langle \fpl u_1,\tau_{\eps}(u_2-u_1)\varphi \rangle + \langle \fpl u_2,(1-\tau_{\eps}(u_2-u_1))\varphi \rangle \label{Ss} \\
&\ge \int_{\Omega} f(x,u_1)\tau_{\eps}(u_2-u_1)\varphi \,dx + \int_{\Omega} f(x,u_2)(1-\tau_{\eps}(u_2-u_1))\varphi \,dx. \nonumber
\end{align}
We focus on the left-hand side of \eqref{Ss}. Setting for brevity $\tau_{\eps}=\tau_{\eps}(u_2-u_1)$ and $a^{p-1}=|a|^{p-2}a$ for all $a\in\R$, and recalling that $\tau_{\eps}=0$ in $A_2$, while $\tau_{\eps} \rightarrow 1$ in $A_1$ as $\eps \to 0^+$, we get
\begin{align*}
&\langle \fpl u_1,\tau_{\eps} \varphi\rangle + \langle \fpl u_2,(1-\tau_{\eps})\varphi\rangle \label{sum} \\
&= \iint_{\R^N\times\R^N}(u_1(x)-u_1(y))^{p-1}(\tau_{\eps}(x)\varphi(x)-\tau_{\eps}(y)\varphi(y))\,d\mu \\
&+ \iint_{\R^N\times\R^N}(u_2(x)-u_2(y))^{p-1}[(1-\tau_{\eps}(x))\varphi(x)-(1-\tau_{\eps}(y))\varphi(y)]\,d\mu \\
&=: I.
\end{align*}
Using the definition of $A_1$ and $A_2$, we obtain
\reqnomode
\begin{align*}
I &= \iint_{A_1\times A_1}(u_1(x)-u_1(y))^{p-1}(\varphi(x)-\varphi(y))\tau_{\eps}(x)\,d\mu \tag{A} \\
&+  \iint_{A_1\times A_1}(u_1(x)-u_1(y))^{p-1}\varphi(y)(\tau_{\eps}(x)-\tau_{\eps}(y))\,d\mu \tag{B} \\
&+\iint_{A_1\times A_2}(u_1(x)-u_1(y))^{p-1}\varphi(x)\tau_{\eps}(x)\,d\mu \tag{C} \\
&- \iint_{A_2 \times A_1}(u_1(x)-u_1(y))^{p-1}\varphi(y)\tau_{\eps}(y)\,d\mu \tag{D}\\
&+\iint_{A_1\times A_1}(u_2(x)-u_2(y))^{p-1}(\varphi(x)-\varphi(y))(1-\tau_{\eps}(x))\,d\mu \tag{E}\\
&- \iint_{A_1\times A_1}(u_2(x)-u_2(y))^{p-1}\varphi(y)(\tau_{\eps}(x)-\tau_{\eps}(y))\,d\mu \tag{B}\\
&+\iint_{A_1\times A_2}(u_2(x)-u_2(y))^{p-1}(\varphi(x)-\varphi(y))(1-\tau_{\eps}(x))\,d\mu \tag{F}\\
&- \iint_{A_1\times A_2}(u_2(x)-u_2(y))^{p-1}\varphi(y)\tau_{\eps}(x)\,d\mu \tag{C}\\
&+ \iint_{A_2\times A_1}(u_2(x)-u_2(y))^{p-1}\varphi(x)\tau_{\eps}(y)\,d\mu \tag{D}\\
&+ \iint_{A_2\times A_1}(u_2(x)-u_2(y))^{p-1}(\varphi(x)-\varphi(y))(1-\tau_{\eps}(y))\,d\mu \tag{G}\\
&+ \iint_{A_2\times A_2}(u_2(x)-u_2(y))^{p-1}(\varphi(x)-\varphi(y))\,d\mu \tag{H}.
\end{align*}
We then put together the integrals with the same letter and note that $({\rm E}),({\rm F}),({\rm G})\to 0$ as $\eps \to 0^+$. So, we have
\begin{align*}
I &= \iint_{A_1\times A_1}(u_1(x)-u_1(y))^{p-1}(\varphi(x)-\varphi(y))\,d\mu \tag{A}\\
&+  \iint_{A_1\times A_1}[(u_1(x)-u_1(y))^{p-1}-(u_2(x)-u_2(y))^{p-1}] \varphi(y)(\tau_{\eps}(x)-\tau_{\eps}(y))\,d\mu \tag{B}\\
&+ \iint_{A_1\times A_2}[(u_1(x)-u_1(y))^{p-1}\varphi(x)-(u_2(x)-u_2(y))^{p-1}\varphi(y)] \tau_{\eps}(x)\,d\mu \tag{C}\\
&+ \iint_{A_2\times A_1}[(u_2(x)-u_2(y))^{p-1}\varphi(x)-(u_1(x)-u_1(y))^{p-1}\varphi(y)] \tau_{\eps}(y)\,d\mu \tag{D}\\
&+ \iint_{A_2\times A_2}(u_2(x)-u_2(y))^{p-1}(\varphi(x)-\varphi(y))\,d\mu \tag{H}\\
&+ {\bf o}(1).
\end{align*}
Now we note that for all $x,y\in A_1$
\[u_1(x)-u_1(y) \ge u_2(x)-u_2(y) \ \Leftrightarrow \ u_2(y)-u_1(y) \ge u_2(x) - u_1(x) \ \Leftrightarrow \ \tau_{\eps}(y) \ge \tau_{\eps}(x),\]
hence the integrand in $({\rm B})$ is negative. Besides, for all $x\in A_1$, $y\in A_2$
\[u_1(x)-u_1(y) \le u_1(x)-u_2(y) \le u_2(x)-u_2(y),\]
and for all $x\in A_2$, $y\in A_1$
\[u_2(x)-u_2(y) \le u_2(x)-u_1(y) \le u_1(x)-u_1(y),\]
so we can estimate the integrands in $({\rm C})$, $({\rm D})$ respectively and get
\begin{align*}
I &\le \iint_{A_1\times A_1}(u_1(x)-u_1(y))^{p-1}(\varphi(x)-\varphi(y))\,d\mu \\
&+ \iint_{A_1\times A_2}(u_1(x)-u_2(y))^{p-1}(\varphi(x)-\varphi(y))\,d\mu\\
&+ \iint_{A_2\times A_1}[(u_2(x)-u_1(y))^{p-1}(\varphi(x)-\varphi(y))\,d\mu \\
&+ \iint_{A_2\times A_2}(u_2(x)-u_2(y))^{p-1}(\varphi(x)-\varphi(y))\,d\mu + {\bf o}(1)\\
&= \langle \fpl u,\varphi \rangle + {\bf o}(1).
\end{align*}
\leqnomode
All in all, we have
\beq\label{lhs}
\langle \fpl u_1,\tau_{\eps}(u_2-u_1)\varphi \rangle + \langle \fpl u_2,(1-\tau_{\eps}(u_2-u_1))\varphi \rangle \le \langle \fpl u,\varphi \rangle + {\bf o}(1),
\eeq
as $\eps\to 0^+$. Regarding the right-hand side of \eqref{Ss}, we use the bounds from ${\bf H}_0$ and the definition of $\tau_\eps$ to get
\[|f(\cdot, u_1)\tau_{\eps}^+(u_2-u_1)\varphi| \le c_0 ( 1 + |u_1|^{q-1}) \varphi,\]
\[|f(\cdot, u_2)(1-\tau_{\eps}^+(u_2-u_1))\varphi| \le c_0 ( 1 + |u_2|^{q-1}) \varphi,\]
and pass to the limit as $\eps \to 0^+$:
\begin{align}
&\int_{\Omega} f(x,u_1)\tau_{\eps}(u_2-u_1)\varphi\,dx + \int_{\Omega} f(x,u_2)(1-\tau_{\eps}(u_2-u_1))\varphi\,dx \label{f}\\
&= \int_{\Omega} f(x,u_1)\chi_{A_1}\varphi\,dx + \int_{\Omega} f(x,u_2)\chi_{A_2}\varphi\,dx +{\bf o}(1) \nonumber \\
&= \int_{\Omega} f(x,u)\varphi\,dx+{\bf o}(1). \nonumber
\end{align}
Plugging \eqref{lhs}, \eqref{f} into \eqref{Ss} we have for all $\varphi \in C_c^{\infty}(\overline{\Omega})_+$
\[\langle \fpl u, \varphi \rangle \ge \int_{\Omega} f(x,u)\varphi\,dx.\]
By density, the same holds with test functions in $W_0^{s,p}(\Omega)_+$, hence $u$ is a supersolution of \eqref{p}, which proves \ref{S}. Similarly we prove \ref{s}.
\end{proof}

\noindent
Now we consider a sub-supersolution pair $(\underline{u}, \overline{u})$ and we study the set $\mathcal{S}(\underline{u}, \overline{u})$. We begin with a sub-supersolution principle, showing that $\mathcal{S}(\underline{u},\overline{u})\neq\emptyset$:

\begin{lemma}\label{Ex}
Let ${\bf H}_0$ hold and $(\underline{u}, \overline{u})$ be a sub-supersolution pair of \eqref{p}. Then, there exists $u \in \mathcal{S}(\underline{u}, \overline{u})$.
\end{lemma}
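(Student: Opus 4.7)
My approach is the classical truncation-plus-minimization scheme, adapted to the nonlocal setting. I would define a truncated Carath\'eodory nonlinearity $g:\Omega\times\R\to\R$ by
\[g(x,t)=f\bigl(x,\max\{\underline{u}(x),\min\{\overline{u}(x),t\}\}\bigr),\]
and set $G(x,t)=\int_0^t g(x,\tau)\,d\tau$, $\Psi(u)=\|u\|_{s,p}^p/p-\int_\Omega G(x,u)\,dx$ on $\w$. Since $\underline{u},\overline{u}\in\widetilde{W}^{s,p}(\Omega)$ restricted to $\Omega$ lie in $L^q(\Omega)$, hypothesis ${\bf H}_0$ yields the uniform-in-$t$ bound $|g(x,t)|\le h(x):=c_0(1+(|\underline{u}|\vee|\overline{u}|)^{q-1})\in L^{q'}(\Omega)$, so that $|G(x,t)|\le h(x)|t|$. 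The strategy is to minimize $\Psi$ over $\w$ and then show the minimizer lies in the order interval $[\underline{u},\overline{u}]$, where the truncation is invisible.

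The linear growth in $t$ of $G$, together with the Sobolev embedding $\w\hookrightarrow L^q(\Omega)$, makes $\Psi$ of class $C^1$, coercive (since $\int_\Omega G(x,u)\,dx\le\|h\|_{q'}\,c_q\|u\|_{s,p}$), and sequentially weakly lower semicontinuous (convexity of the Gagliardo seminorm plus compactness of $\w\hookrightarrow L^q(\Omega)$ combined with the uniform bound on $g$). Hence by the direct method $\Psi$ attains its infimum at some $u\in\w$, and the Euler equation $\Psi'(u)=0$ reads $\langle\fpl u,v\rangle=\int_\Omega g(x,u)v\,dx$ for all $v\in\w$; equivalently, $u$ solves weakly $\fpl u=g(x,u)$ in $\Omega$.

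It remains to show $\underline{u}\le u\le\overline{u}$ in $\Omega$: then $g(x,u)=f(x,u)$ a.e., and $u\in\mathcal{S}(\underline{u},\overline{u})$. For the upper bound, I would test with $v=(u-\overline{u})^+$, which belongs to $\w_+$ because $u=0$ and $\overline{u}\ge 0$ in $\Omega^c$, and lies in $\w$ by the lattice property of $W^{s,p}(\R^N)$. On $\{v>0\}=\{u>\overline{u}\}$ we have $g(x,u)=f(x,\overline{u})$, so the Euler equation gives $\langle\fpl u,v\rangle=\int_\Omega f(x,\overline{u})v\,dx$, while Definition \ref{ss}\ref{S} applied to the supersolution $\overline{u}$ gives $\langle\fpl \overline{u},v\rangle\ge\int_\Omega f(x,\overline{u})v\,dx$. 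Subtracting, $\langle\fpl u-\fpl\overline{u},v\rangle\le 0$. The analogous argument with $(\underline{u}-u)^+$ takes care of the lower bound.

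\textbf{Main obstacle.} The decisive step is extracting $u\le\overline{u}$ from $\langle\fpl u-\fpl\overline{u},v\rangle\le 0$: the plain monotonicity of $\fpl$ (Lemma \ref{S+}) gives a non-strict inequality for the test function $u-\overline{u}$, but here the test function is its positive part. I would expand the double integral over the partition of $\R^{2N}$ induced by $A=\{u>\overline{u}\}\subset\Omega$. On $A\times A$ and $A^c\times A^c$ the integrand $[\Phi_p(u(x)-u(y))-\Phi_p(\overline{u}(x)-\overline{u}(y))][v(x)-v(y)]$ (with $\Phi_p(t)=|t|^{p-2}t$) is non-negative by monotonicity of $\Phi_p$. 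On $A\times A^c$ and $A^c\times A$, combining $u(x)-u(y)>\overline{u}(x)-\overline{u}(y)$ with the strict monotonicity of $\Phi_p$ and $v(x)-v(y)>0$, the integrand is strictly positive off the diagonal. Since $\Omega^c\subset A^c$ has positive Lebesgue measure, the cross terms force $|A|=0$ whenever $|A|>0$, a contradiction. This nonlocal contribution from $(A\times A^c)\cup(A^c\times A)$ is precisely where the argument differs from its local counterpart and must be handled carefully.
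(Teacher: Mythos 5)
Your proposal is correct but takes a genuinely different route from the paper on both main steps, and it is worth recording the comparison. For existence, the paper avoids minimization altogether: it sets $A=\fpl$, defines $B$ as the Nemytskii operator for the truncation $\tilde f$, verifies that $A+B$ is pseudomonotone, bounded, and coercive, and then invokes the surjectivity theorem for pseudomonotone operators \cite[Theorem 2.99]{CLM}. You instead minimize the truncated energy $\Psi$ directly, which works because the uniform majorant $h\in L^{q'}(\Omega)$ makes $\Psi$ coercive and the compact embedding $\w\hookrightarrow L^q(\Omega)$ gives sequential weak lower semicontinuity; this is more elementary and self-contained, though it exploits the variational structure that the operator-theoretic argument does not need. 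For the comparison $\underline{u}\le u\le\overline{u}$, both proofs arrive at $\langle\fpl u-\fpl\overline{u},(u-\overline{u})^+\rangle\le 0$, but the paper then concludes $\|(u-\overline{u})^+\|_{s,p}=0$ via the two algebraic inequalities $|a^+-b^+|^p\le(a-b)^{p-1}(a^+-b^+)$ from \cite[Lemma A.2]{BP} and $(a-b)^{p-1}\le C(a^{p-1}-b^{p-1})$ from \cite[Lemma 2.3]{IMS3}, whereas you partition $\R^N\times\R^N$ by $A=\{u>\overline{u}\}$ and argue sign-by-sign: the diagonal blocks contribute $\ge 0$ by monotonicity of $\Phi_p$, and the off-diagonal blocks contribute strictly positively whenever $|A|>0$ (note $|A^c|>0$ automatically since $\Omega^c\subset A^c$), forcing $|A|=0$. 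Your decomposition argument is cleaner and avoids importing those lemmas, at the cost of having to note that each of the four pieces is individually integrable (which follows since the full integrand is absolutely integrable by $u,\overline{u}\in\widetilde{W}^{s,p}(\Omega)$, $v\in\w$), whereas the paper's inequality gives the additional quantitative estimate $\|(u-\overline{u})^+\|_{s,p}^p\le C\langle\fpl u-\fpl\overline{u},(u-\overline{u})^+\rangle$. One small caveat in your write-up: $(u-\overline{u})^+\in\w$ does not follow from the lattice property of $W^{s,p}(\R^N)$ alone, since $\overline{u}\notin W^{s,p}(\R^N)$ in general, but only from $\overline{u}\in\widetilde{W}^{s,p}(\Omega)$ together with $(u-\overline{u})^+$ being supported in $\overline{\Omega}$; the paper also leaves this implicit, so it is not a gap specific to your proof.
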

\begin{proof}
In this argument we use some nonlinear operator theory from \cite{CLM}. First we define $A=\fpl:\w\to W^{-s,p'}(\Omega)$. By Lemma \ref{S+} $A$ is monotone and continuous, hence hemicontinuous \cite[Definition 2.95 $(iii)$]{CLM}, therefore $A$ is pseudomonotone \cite[Lemma 2.98 $(i)$]{CLM}.
\vskip2pt
\noindent
Besides, we set for all $(x,t)\in\Omega\times\R$
\[\tilde{f}(x,t)=
\begin{cases}
f(x,\underline{u}(x)) \ & \text{ if } t \le \underline{u}(x) \\
f(x,t) \ & \text{ if } \underline{u}(x) < t < \overline{u}(x) \\
f(x, \overline{u}(x)) & \text{ if } t \ge \overline{u}(x).
\end{cases}\]
In general, $\tilde{f}$ does not satisfy ${\bf H}_0$, but still $\tilde{f}:\Omega\times\R\to\R$ is a Carath\'{e}odory function s.t.\ for a.e.\ $x\in\Omega$ and all $t\in\R$
\begin{equation}
|\tilde{f}(x,t)| \le c_0 ( 1 + |\underline{u}|^{q-1} + |\overline{u}|^{q-1}) .
\label{ft}
\end{equation}
We define $B: \w \rightarrow W^{-s, p'}(\Omega)$ by setting for all $u,v \in \w$
\[\langle B(u),v\rangle = - \int_{\Omega} \tilde{f}(x,u)v\,dx,\]
well posed by \eqref{ft}, as $|\underline{u}|^{q-1}, |\overline{u}|^{q-1} \in L^{q'}(\Omega)$. We prove that $B$ is strongly continuous \cite[Definition 2.95 $(iv)$]{CLM}. Indeed, let $(u_n)$ be a sequence s.t.\ $u_n \rightharpoonup u$ in $\w$, passing to a subsequence if necessary, we have $u_n \rightarrow u$ in $L^q(\Omega)$, $u_n(x) \rightarrow u(x)$ and $|u_n(x)| \le h(x)$ for a.e.\ $x \in \Omega$, for some $h \in L^q(\Omega)$. Therefore, for all $n \in \N$, by \eqref{ft} we have for a.e.\ $x\in\Omega$
\[|\tilde{f}(x,u_n) - \tilde{f}(x,u)| \le 2 c_0 ( 1 + |\underline{u}|^{q-1} + |\overline{u}|^{q-1}) \in L^{q'}(\Omega),\]
while by continuity of $f(x,\cdot)$ we have $\tilde{f}(x,u_n) \to \tilde{f}(x,u)$. Hence, for all $v\in \w$,
\begin{align*}
|\langle B(u_n) - B(u),v\rangle| &\le \int_{\Omega} |\tilde{f}(x,u_n)- \tilde{f}(x,u)| |v|\,dx \\
&\le \|\tilde{f}(\cdot,u_n)- \tilde{f}(\cdot,u)\|_{q'} \|v\|_q
\end{align*}
and the latter tends to $0$ as $n \to \infty$, uniformly with respect to $v$. Therefore $B(u_n) \rightarrow B(u)$ in $W^{-s,p'}(\Omega)$. By \cite[Lemma 2.98  $(ii)$]{CLM}, $B$ is pseudomonotone. Thus, $A+B$ is pseudomonotone.
\vskip2pt
\noindent
Now we prove that $A+B$ is bounded. Indeed, for all $u \in \w$ we have $\|A(u)\|_{-s,p'} \le \|u\|_{s,p}^{p-1}$ and
\begin{align*}
\|B(u)\|_{-s,p'} &= \sup_{\|v\|_{s,p} \le 1} \int_{\Omega} \tilde{f}(x,u) v\,dx \\
&\le C\|\tilde{f}(\cdot,u)\|_{q'} \\
&\le C (1 + \|\underline{u}\|_q^{q-1} + \|\overline{u}\|_q^{q-1}),
\end{align*}
where we have used \eqref{ft} and the continuous embedding $\w\hookrightarrow L^q(\Omega)$.
\vskip2pt
\noindent
Finally we prove that $A+B$ is coercive. Indeed, for all $u \in \w \setminus \{0\}$ we have
\begin{align*}
\frac{\langle A(u) + B(u),u\rangle} {\|u\|_{s,p}} &=  \|u\|_{s,p}^{p-1} - \frac{1}{\|u\|_{s,p}} \int_{\Omega} \tilde{f}(x,u) u\,dx \\
& \ge \|u\|_{s,p}^{p-1} - \frac{C}{\|u\|_{s,p}} \int_{\Omega} (1 + |\underline{u}|^{q-1} + |\overline{u}|^{q-1}) |u|\,dx \\
& \ge \|u\|_{s,p}^{p-1} - \frac{C}{\|u\|_{s,p}} \left(\|u\|_1 + \|\underline{u}\|_q^{q-1} \|u\|_q+ \|\overline{u}\|_q^{q-1} \|u\|_q \right)\\
& \ge \|u\|_{s,p}^{p-1} - C,
\end{align*}
and the latter tends to $\infty$ as $\|u\|_{s,p} \to \infty$ (here we have used the continuous embeddings $\w\hookrightarrow L^1(\Omega),\,L^q(\Omega)$). By \cite[Theorem 2.99]{CLM}, the equation
\begin{equation}
A(u)+B(u)=0 \text{ in } W^{-s,p'}(\Omega)
\label{eq}
\end{equation}
has a solution $u\in\w$. Now we prove that in $\Omega$ 
\begin{equation}
\underline{u} \le u \le \overline{u}.
\label{int1}
\end{equation}
Clearly \eqref{int1} holds in $\Omega^c$. Testing \eqref{eq} with $(u-\overline{u})^+ \in \w_+$ we have
\begin{align*}
\langle \fpl u, (u-\overline{u})^+\rangle &= \int_{\Omega} \tilde{f}(x,u) (u-\overline{u})^+ \,dx \\
&= \int_{\Omega} f(x, \overline{u}) (u-\overline{u})^+ \,dx \\
&\le \langle \fpl \overline{u}, (u-\overline{u})^+\rangle,
\end{align*}
where we also used that $\overline{u}$ is a supersolution of \eqref{p}, so
\[\langle \fpl u - \fpl\overline{u}, (u-\overline{u})^+\rangle \le 0.\]
By \cite[Lemma A.2]{BP} and \cite[Lemma 2.3]{IMS3} (with $g(t)=t^+$) we have for all $a,b \in \R$
\[|a^+-b^+|^p \le (a-b)^{p-1} (a^+ - b^+), \quad (a-b)^{p-1} \le C (a^{p-1}-b^{p-1}),\]
hence
\begin{align*}
&\|(u-\overline{u})^+\|_{s,p}^p = \iint_{\R^N\times\R^N} |(u(x)-\overline{u}(x))^+ - (u(y)-\overline{u}(y))^+|^p\,d\mu\\
& \le \iint_{\R^N\times\R^N} [(u(x)-\overline{u}(x)) - (u(y)-\overline{u}(y))]^{p-1}[(u(x)-\overline{u}(x))^+ - (u(y)-\overline{u}(y))^+]\,d\mu\\
& \le C \iint_{\R^N\times\R^N} [(u(x) - u(y))^{p-1}-(\overline{u}(x)-\overline{u}(y))^{p-1}]  [(u(x)-\overline{u}(x))^+ - (u(y)-\overline{u}(y))^+]\,d\mu\\
&=C \langle \fpl u - \fpl \overline{u}, (u-\overline{u})^+\rangle \le 0,
\end{align*}
so $(u-\overline{u})^+ = 0$, i.e., $u \le \overline{u}$ in $\Omega$. Similarly we prove $u \ge \underline{u}$ and achieve \eqref{int1}. Finally, using \eqref{int1} in \eqref{eq} we see that $u \in \w$ solves \eqref{p}. Thus $u\in \mathcal{S}(\underline{u}, \overline{u})$.
\end{proof}

\noindent
We recall that a partially ordered set $(S,\le)$ is {\em downward directed} (resp., {\em upward directed}) if for all $u_1,u_2\in S$ there exists $u_3\in S$ s.t.\ $u_3\le u_1,u_2$ (resp., $u_3\ge u_1,u_2$), and that $S$ is {\em directed} if it is both downward and upward directed.

\begin{lemma}\label{dir}
Let ${\bf H}_0$ hold, $(\underline{u}, \overline{u})$ be a sub-supersolution pair of \eqref{p}. Then, $\mathcal{S}(\underline{u}, \overline{u})$ is directed.
\end{lemma}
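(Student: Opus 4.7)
The plan is to reduce the directedness statement to the two preceding lemmas, namely Lemma \ref{Mm} (max of subsolutions is a subsolution, min of supersolutions is a supersolution) and Lemma \ref{Ex} (existence of a solution in any sub-supersolution interval). Fix $u_1,u_2\in\mathcal{S}(\underline{u},\overline{u})$; we need to produce $u_3,u_4\in\mathcal{S}(\underline{u},\overline{u})$ with $u_3\le u_1\wedge u_2$ and $u_4\ge u_1\vee u_2$.

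For the upward direction, I would observe that any solution of \eqref{p} is in particular a subsolution (since equality in Definition \ref{sol} implies the inequality in Definition \ref{ss}\ref{s}, and trivially $u_i=0\le 0$ in $\Omega^c$). Hence by Lemma \ref{Mm}\ref{s}, $u_1\vee u_2\in\widetilde{W}^{s,p}(\Omega)$ is a subsolution of \eqref{p}. Since $u_1,u_2\le\overline{u}$ in $\Omega$, we have $u_1\vee u_2\le\overline{u}$ in $\Omega$, so the pair $(u_1\vee u_2,\overline{u})$ is a sub-supersolution pair. Applying Lemma \ref{Ex} yields $u_4\in\mathcal{S}(u_1\vee u_2,\overline{u})$; by transitivity of the order, $u_4\in\mathcal{S}(\underline{u},\overline{u})$ and $u_4\ge u_1,u_2$.

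Symmetrically, each $u_i$ is also a supersolution of \eqref{p}, so by Lemma \ref{Mm}\ref{S}, $u_1\wedge u_2$ is a supersolution. Since $\underline{u}\le u_1,u_2$ in $\Omega$, we have $\underline{u}\le u_1\wedge u_2$ in $\Omega$, so $(\underline{u},u_1\wedge u_2)$ is a sub-supersolution pair. Lemma \ref{Ex} then gives $u_3\in\mathcal{S}(\underline{u},u_1\wedge u_2)\subseteq\mathcal{S}(\underline{u},\overline{u})$ with $u_3\le u_1,u_2$. This proves the downward directedness and completes the argument.

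There is essentially no obstacle in this proof: all the technical work has already been absorbed into Lemmas \ref{Mm} and \ref{Ex}. The only thing to double-check is that a solution in $\w$ indeed qualifies as both a subsolution and a supersolution in the sense of Definition \ref{ss} (the boundary conditions $u_i=0$ in $\Omega^c$ satisfy both $\ge 0$ and $\le 0$), which is explicitly noted right after Definition \ref{sol}.
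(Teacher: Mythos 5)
Your proof is correct and follows exactly the paper's argument: reduce downward (resp.\ upward) directedness to Lemma \ref{Mm} to get that $u_1\wedge u_2$ is a supersolution (resp.\ $u_1\vee u_2$ a subsolution), then apply Lemma \ref{Ex} to the resulting sub-supersolution pair. Nothing to add.
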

\begin{proof}
We prove that $\mathcal{S}(\underline{u}, \overline{u})$ is downward directed. Let $u_1, u_2 \in \mathcal{S}(\underline{u}, \overline{u})$, then in particular $u_1, u_2$ are supersolutions of \eqref{p}. Set $\hat{u}=u_1 \land u_2 \in \w$, then by Lemma \ref{Mm} $\hat{u}$ is a supersolution of \eqref{p} and $\underline{u} \le \hat{u}$.
By Lemma \ref{Ex} there exists $u_3 \in \mathcal{S}(\underline{u}, \hat{u})$, in particular $u_3 \in \mathcal{S}(\underline{u}, \overline{u})$ and $u_3 \le u_1 \land u_2$.
\vskip2pt
\noindent
Similarly we see that $\mathcal{S}(\underline{u}, \overline{u})$ is upward directed.
\end{proof}

\noindent
Another important property of $\mathcal{S}(\underline{u},\overline{u})$ is compactness:

\begin{lemma}\label{comp}
Let ${\bf H}_0$ hold, $(\underline{u}, \overline{u})$ be a sub-supersolution pair of \eqref{p}. Then, $\mathcal{S}(\underline{u}, \overline{u})$ is compact in $\w$.
\end{lemma}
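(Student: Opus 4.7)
My plan is to show sequential compactness. I would start with an arbitrary sequence $(u_n)\subset\mathcal{S}(\underline{u},\overline{u})$ and first establish that it is bounded in $\w$. Since each $u_n$ solves \eqref{p}, testing the equation with $u_n$ itself yields
\[
\|u_n\|_{s,p}^p = \int_\Omega f(x,u_n)u_n\,dx.
\]
By ${\bf H}_0$ and the pointwise bound $|u_n|\le|\underline{u}|\vee|\overline{u}|$ in $\Omega$ (with $u_n=0$ in $\Omega^c$), we can dominate the integrand by $c_0(1+w^{q-1})w$, where $w=|\underline{u}|\vee|\overline{u}|$. Since $\underline{u},\overline{u}\in\widetilde{W}^{s,p}(\Omega)$, their restrictions to $\Omega$ lie in $W^{s,p}(\Omega)\hookrightarrow L^q(\Omega)$, so $w\in L^q(\Omega)$. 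This bounds $\|u_n\|_{s,p}^p$ uniformly.

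By reflexivity and the compact embedding $\w\hookrightarrow L^q(\Omega)$, a subsequence satisfies $u_n\rightharpoonup u$ in $\w$, $u_n\to u$ in $L^q(\Omega)$, and $u_n\to u$ a.e. in $\Omega$, with $|u_n|\le w$ a.e. Testing the equation for $u_n$ with $u_n-u\in\w$ gives
\[
\langle\fpl u_n,u_n-u\rangle=\int_\Omega f(x,u_n)(u_n-u)\,dx.
\]
By ${\bf H}_0$ and H\"older's inequality, $|f(\cdot,u_n)|\le c_0(1+w^{q-1})\in L^{q'}(\Omega)$, and strong convergence $u_n\to u$ in $L^q(\Omega)$ forces the right-hand side to $0$. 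Hence $\limsup_n\langle\fpl u_n,u_n-u\rangle\le 0$, and by the $(S)_+$ property of $\fpl$ (Lemma \ref{S+}), $u_n\to u$ in $\w$.

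Finally I would verify $u\in\mathcal{S}(\underline{u},\overline{u})$. The bounds $\underline{u}\le u\le\overline{u}$ in $\Omega$ pass to the a.e. limit. To pass to the limit in the weak form of \eqref{p}, fix $v\in\w$: continuity of $\fpl$ from Lemma \ref{S+} handles the left side, while dominated convergence (with dominating function $c_0(1+w^{q-1})|v|\in L^1(\Omega)$, using H\"older and the continuous embedding $\w\hookrightarrow L^q(\Omega)$) handles the right. Thus $u$ solves \eqref{p}, so $u\in\mathcal{S}(\underline{u},\overline{u})$, and $\mathcal{S}(\underline{u},\overline{u})$ is compact in $\w$.

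The main obstacle is the a priori bound in $\w$: the sub- and supersolutions are not assumed bounded in $L^\infty$, only in $\widetilde{W}^{s,p}(\Omega)$, so the argument relies on the fact that their restrictions to $\Omega$ lie in $L^q(\Omega)$ via the fractional Sobolev embedding on a neighborhood of $\overline{\Omega}$. Once this bound is in hand, the rest is a standard $(S)_+$ plus dominated convergence argument.
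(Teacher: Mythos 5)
Your proposal is correct and follows essentially the same route as the paper: a priori bound in $\w$ by testing the equation with $u_n$ and using the pointwise trapping between $\underline{u}$ and $\overline{u}$, extraction of a weakly convergent subsequence, strong convergence via the $(S)_+$ property from Lemma \ref{S+}, and passage to the limit in the weak formulation to conclude $u\in\mathcal{S}(\underline{u},\overline{u})$. The only cosmetic difference is that you send $\int_\Omega f(x,u_n)(u_n-u)\,dx$ to zero via H\"older's inequality and strong $L^q$-convergence, while the paper uses dominated convergence with the $L^q$-dominating function; both are standard and equivalent here, and your explicit remark that $\underline{u},\overline{u}\in L^q(\Omega)$ (via the local fractional Sobolev embedding implicit in the definition of $\widetilde{W}^{s,p}(\Omega)$) is a useful clarification the paper leaves tacit.
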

\begin{proof}
Let $(u_n)$ be a sequence in $\mathcal{S}(\underline{u}, \overline{u})$, then for all $n\in \N$, $v \in \w$
\begin{equation}
\langle \fpl u_n,v\rangle= \int_{\Omega} f(x,u_n)v\,dx
\label{c1}
\end{equation}
and $\underline{u} \le u_n \le \overline{u}$. Testing \eqref{c1} with $u_n \in \w$, we have by ${\bf H}_0$
\begin{align*}
\|u_n\|_{s,p}^p &= \int_{\Omega} f(x,u_n)u_n\,dx \\
&\le c_0 \int_{\Omega} (|u_n|+|u_n|^q)\,dx \\
&\le c_0(\|\underline{u}\|_1 + \|\overline{u}\|_1 + \|\underline{u}\|_q^q + \|\overline{u}\|_q^q)\le C,
\end{align*}
hence $(u_n)$ is bounded in $\w$. Passing to a subsequence, we have $u_n \rightharpoonup u$ in $\w$, $u_n(x) \rightarrow u(x)$ and $|u_n(x)|\le h(x)$ for a.e.\ $x\in\N$, with $h\in L^q(\Omega)$. Therefore,
\begin{align*}
|f(x,u_n)(u_n-u)| &\le c_0 (1+|u_n|^{q-1}) |u_n-u| \\
&\le 2c_0 (1+g(x)^{q-1}) (|\underline{u}| + |\overline{u}|) \in L^1(\Omega).
\end{align*}
Testing \eqref{c1} with $u_n - u \in \w$, we get 
\[\langle \fpl(u_n),u_n-u\rangle= \int_{\Omega} f(x,u_n)(u_n-u)\,dx,\]
and the latter tends to $0$ as $n\to\infty$. By Lemma \ref{S+} we have $u_n \rightarrow u$ in $\w$. Then, we can pass to the limit in \eqref{c1} and conclude that $u \in \mathcal{S}(\underline{u}, \overline{u})$.
\end{proof}

\noindent
The main result of this section states that $\mathcal{S}(\underline{u}, \overline{u})$ contains extremal elements with respect to the pointwise ordering:

\begin{theorem}\label{SG}
Let ${\bf H}_0$ hold, $(\underline{u}, \overline{u})$ be a sub-supersolution pair of \eqref{p}. Then $\mathcal{S}(\underline{u}, \overline{u})$ contains a smallest and a biggest element.
\end{theorem}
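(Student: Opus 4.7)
The plan is to combine the separability of $\w$, the directedness from Lemma \ref{dir}, and the compactness from Lemma \ref{comp} in a standard Zorn-type argument. I will construct the smallest element explicitly; the existence of the biggest is then obtained symmetrically, invoking upward directedness and reversing the order throughout.

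First I would exploit the separability of $\w$ to pick a countable dense subset $(v_n)_{n\in\N}$ of the (nonempty, by Lemma \ref{Ex}) set $\mathcal{S}(\underline{u},\overline{u})$. Next, using downward directedness (Lemma \ref{dir}), I would inductively build a sequence $(u_n)\subset\mathcal{S}(\underline{u},\overline{u})$ with $u_1\le v_1$ and, given $u_n$, choose $u_{n+1}\in\mathcal{S}(\underline{u},\overline{u})$ with $u_{n+1}\le u_n\wedge v_{n+1}$ in $\Omega$; this is possible since $u_n,v_{n+1}\in\mathcal{S}(\underline{u},\overline{u})$, so downward directedness produces a common minorant inside $\mathcal{S}(\underline{u},\overline{u})$. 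By construction, $(u_n)$ is pointwise non-increasing and satisfies $u_n\le v_n$ a.e.\ in $\Omega$ for all $n\in\N$.

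By Lemma \ref{comp}, a subsequence of $(u_n)$ converges in $\w$ to some $u_*\in\mathcal{S}(\underline{u},\overline{u})$. The pointwise monotonicity of $(u_n)$ together with the uniform lower bound $u_n\ge\underline{u}$ a.e.\ guarantees that the full sequence converges a.e.\ in $\Omega$ to a common limit, which by uniqueness must coincide with $u_*$. To check that $u_*$ is the smallest element, fix any $u\in\mathcal{S}(\underline{u},\overline{u})$: by density there is a subsequence $v_{n_k}\to u$ in $\w$, hence, up to a further subsequence, a.e.\ in $\Omega$. Since $u_{n_k}\le v_{n_k}$ a.e.\ for every $k$ and $u_{n_k}\to u_*$ a.e., passing to the limit yields $u_*\le u$ a.e.\ in $\Omega$, as required.

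The delicate point is that Lemma \ref{comp} only provides subsequential convergence, while extremality needs the identification of the entire-sequence limit; this is precisely why pointwise monotonicity of the constructed sequence is crucial. Once this is in hand, the remaining verifications are routine consequences of the density of $(v_n)$ and the pointwise passage to the limit.
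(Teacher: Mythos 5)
Your proof is correct, but note that despite the opening reference to a ``Zorn-type argument'' what you actually give is a fully constructive proof: you never invoke Zorn's lemma. What you have written is, in substance, the argument of \cite[Theorem~3.11]{CLM}, which the paper explicitly acknowledges in the remark following Theorem~\ref{SG} as one admissible route. The paper's own proof is genuinely different: it takes a countable dense subset $(x_k)$ of $\Omega$ (not of $\mathcal{S}(\underline{u},\overline{u})$), sets $m_k=\inf_{u\in\mathcal{S}(\underline{u},\overline{u})}u(x_k)$, uses directedness to produce $u_n\in\mathcal{S}(\underline{u},\overline{u})$ with $u_n(x_k)\le m_k+1/n$ for all $k\le n$, and then crucially exploits the uniform $C_s^\alpha(\overline\Omega)$-bound on $\mathcal{S}(\underline{u},\overline{u})$ (via Lemmas~\ref{bound} and~\ref{reg} and the compact embedding $C_s^\alpha\hookrightarrow C_s^0$) to extract a subsequence converging in $C_s^0(\overline\Omega)$, hence pointwise everywhere; this gives $u_0(x_k)\le m_k$ for all $k$, and density of $(x_k)$ together with continuity of all solutions yields minimality. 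Your version instead stays entirely at the level of $\w$-compactness and a.e.\ convergence: you take a countable dense subset $(v_n)$ of the solution set itself, build a pointwise nonincreasing minorizing sequence $u_n$ by applying downward directedness one pair at a time, let monotonicity upgrade subsequential a.e.\ convergence to full-sequence a.e.\ convergence, and pass to the limit in $u_{n_k}\le v_{n_k}$. Both are valid; yours is more elementary in avoiding the fractional regularity theory, while the paper's version has the side benefit (which it records in the remark) of showing that $\mathcal{S}(\underline{u},\overline{u})$ is in fact compact in $C_s^0(\overline\Omega)$, not just in $\w$.
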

\begin{proof}
The set $\mathcal{S}(\underline{u}, \overline{u})$ is bounded in both $\w$ and $C_s^{\alpha}(\overline{\Omega})$. Indeed, for all $u \in \mathcal{S}(\underline{u}, \overline{u})$, testing \eqref{p} with $u \in \w$ we have
\begin{align*}
\|u\|_{s,p}^p &= \int_{\Omega} f(x,u)u\,dx \\
&\le c_0 \int_{\Omega} (|u|+|u|^q)\,dx \\
&\le c_0(\|\underline{u}\|_1 + \|\overline{u}\|_1 + \|\underline{u}\|_q^q + \|\overline{u}\|_q^q),
\end{align*}
hence $\mathcal{S}(\underline{u}, \overline{u})$ is bounded in $\w$. Further, by Lemma \ref{bound}, for all $u\in\mathcal{S}(\underline{u},\overline{u})$ we have $u \in L^{\infty}(\Omega)$, $\|u\|_{\infty} \le C$ (with $C=C(\underline{u}, \overline{u}) >0$, here and in the forthcoming bounds). In turn, this implies $\|f(\cdot, u)\|_{\infty} \le C$. Then we apply Lemma \ref{reg} (with $g=f(\cdot,u)$) to see that $u \in C_s^{\alpha}(\overline{\Omega})$, $\|u\|_{\alpha,s}\le C$. So, $\mathcal{S}(\underline{u}, \overline{u})$ is bounded in $C_s^{\alpha}(\overline{\Omega})$ as well (in particular, then, $\mathcal{S}(\underline{u}, \overline{u})$ is equibounded in $\Omega$).
\vskip2pt
\noindent
Now we prove that $\mathcal{S}(\underline{u}, \overline{u})$ has a minimum. Let $(x_k)$ be a dense subset of $\Omega$, and set
\[m_k= \inf_{u \in \mathcal{S}(\underline{u}, \overline{u})} u(x_k) > - \infty \]
for each $k \ge 1$ (recall $\mathcal{S}(\underline{u}, \overline{u})$ is equibounded). For all $n\in \N$, $k \in \{1, \dots, n\}$ we can find $u_{n,k} \in \mathcal{S}(\underline{u}, \overline{u})$ s.t.\ 
\[u_{n,k}(x_k) \le  m_k + \frac{1}{n}.\]
Since $\mathcal{S}(\underline{u}, \overline{u})$ is downward directed (Lemma \ref{dir}), we can find  $u_n \in \mathcal{S}(\underline{u}, \overline{u})$ s.t.\ $u_n \le u_{n,k}$ for all $k \in \{1, \dots, n\}$. In particular, for all $n \in \N$, $k \in \{1, \dots, n\}$ we have
\begin{equation}
u_n(x_k) \le m_k + \frac{1}{n}.
\label{F}
\end{equation}
Since $\mathcal{S}(\underline{u}, \overline{u})$ is compact (Lemma \ref{comp}), passing to a subsequence we have $u_n \to u_0$ in $\w$ for some $u_0 \in \mathcal{S}(\underline{u}, \overline{u})$. Besides, $(u_n) \subseteq \mathcal{S}(\underline{u}, \overline{u})$ is bounded in $C_s^{\alpha}(\overline{\Omega})$, hence up to a further subsequence $u_n \to u_0$ in $C_s^0(\overline{\Omega})$, in particular $u_n(x) \to u_0(x)$ for all $x \in \overline{\Omega}$. By \eqref{F} we have for all $k \in \N$
\[u_0(x_k)= \lim_n u_n(x_k) \le \lim_n \left(m_k + \frac{1}{n}\right) = m_k.\]
Therefore, given $u \in \mathcal{S}(\underline{u}, \overline{u})$ we have $u_0(x_k) \le u(x_k)$ for all $k \ge 1$, which by density of $(x_k)$ implies $u_0 \le u$. Hence, 
\[u_0 = \min \mathcal{S}(\underline{u}, \overline{u}).\]
Similarly we prove the existence of $\max \mathcal{S}(\underline{u}, \overline{u})$.
\end{proof}

\begin{remark}
For the sake of completeness, we recall that Theorem \ref{SG} can be proved following closely the proof of \cite[Theorem 3.11]{CLM}, using Lemmas \ref{dir}, \ref{comp}, and the fact that $\w$ is separable (another way consists in applying Zorn's Lemma, as in \cite[Remark 3.12]{CLM}). We also note the remark that, as seen in the proof of Theorem \ref{SG}, $\mathcal{S}(\underline{u},\overline{u})$ turns out to be compact in $C^0_s(\overline\Omega)$.
\end{remark}

\section{Extremal constant sign solutions}\label{sec4}

\noindent
In this section we prove that \eqref{p} has a smallest positive and a biggest negative solution (following the ideas of \cite{CP}), under the following hypotheses on $f$:

\begin{itemize}[leftmargin=1cm]
\item[${\bf H}_1$] $f:\Omega\times\R\to\R$ is a Carath\'{e}odory function, for all $(x,t)\in\Omega\times\R$ we set
\[F(x,t)=\int_0^t  f(x,\tau)\,d\tau,\]
and the following conditions hold:
\begin{enumroman}
\item\label{h1} $|f(x,t)| \leq c_0(1+|t|^{q-1})$ for all a.e.\ $x\in\Omega$ and all $t\in\R$ ($c_0>0$, $q \in(p,p_s^*)$);
\item\label{h2} $\displaystyle\limsup_{|t|\to\infty} \frac{F(x,t)}{|t|^p}<\frac{\lambda_1}{p}$ uniformly for a.e.\ $x\in\Omega$;
\item\label{h3} $\displaystyle\lambda_1 < \liminf_{t\to 0}\frac{f(x,t)}{|t|^{p-2}t} \le \limsup_{t\to 0}\frac{f(x,t)}{|t|^{p-2}t} < \infty$ uniformly for a.e.\ $x\in\Omega$.
\end{enumroman}
\end{itemize}

\noindent
Clearly ${\bf H}_1$ implies ${\bf H}_0$. Here $\lambda_1 >0$ denotes the principal eigenvalue of $\fpl$ in $\w$, with associated positive, $L^p(\Omega)$-normalized eigenfunction $\hat{u}_1 \in \w$ (see Lemma \ref{sp} \ref{sp1}). Note that by ${\bf H}_1$ \ref{h3} we have $f(\cdot,0)=0$ in $\Omega$, hence \eqref{p} has the trivial solution $0$. Condition ${\bf H}_1$ \ref{h3} conjures a $(p-1)$-linear behavior of $f(x,\cdot)$ near the origin.
\vskip2pt
\noindent
In this and the forthcoming section, our approach to problem \eqref{p} is purely variational. Our result is the following:

\begin{theorem}\label{main}
Let ${\bf H}_1$ hold. Then, \eqref{p} has a smallest positive solution $u_{+} \in \mathrm{int}(C_s^0(\overline{\Omega})_+)$ 
and a biggest negative solution $u_{-} \in -\mathrm{int}(C_s^0(\overline{\Omega})_+)$.
\end{theorem}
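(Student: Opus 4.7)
The plan is to focus on the smallest positive solution $u_+$; the biggest negative solution $u_-$ will follow by applying the same argument to the reaction $\tilde f(x,t) = -f(x,-t)$, which still satisfies ${\bf H}_1$. Following the scheme of \cite{CP}, I would first produce a distinguished positive solution $\hat u_+$ by direct minimization of a truncated energy, then build the sub-supersolution pair $(\eps\hat u_1,\hat u_+)$ and extract $u_+$ as the monotone limit (as $\eps\downarrow 0$) of $u_\eps := \min\mathcal S(\eps\hat u_1,\hat u_+)$ supplied by Theorem \ref{SG}.

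\textbf{A first positive solution.} Set $f_+(x,t)=f(x,t^+)$, $F_+(x,t)=\int_0^t f_+(x,\tau)\,d\tau$, and consider $\Phi_+(u)=\|u\|_{s,p}^p/p - \int_\Omega F_+(x,u)\,dx$ on $\w$. Condition ${\bf H}_1$ \ref{h2} and the variational characterization of $\lambda_1$ make $\Phi_+$ coercive, and it is sequentially weakly lower semicontinuous, so it attains its infimum at some $\hat u_+\in\w$. Testing $\Phi_+'(\hat u_+)=0$ with $-\hat u_+^-$ (noting that $f_+(x,\hat u_+)\hat u_+^-\equiv 0$) and invoking the standard estimate $\langle\fpl\hat u_+,-\hat u_+^-\rangle\ge\|\hat u_+^-\|_{s,p}^p$ forces $\hat u_+\ge 0$. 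From ${\bf H}_1$ \ref{h3} one gets $F(x,t)\ge\mu t^p/p$ for $0\le t\le\delta$ with some $\mu>\lambda_1$, hence (using $\|\hat u_1\|_{s,p}^p=\lambda_1$) $\Phi_+(t\hat u_1)\le t^p(\lambda_1-\mu)/p<0$ for $t>0$ small, so $\hat u_+\not\equiv 0$. Lemmas \ref{bound} and \ref{reg} put $\hat u_+$ in $C_s^\alpha(\overline\Omega)$, and Lemma \ref{DQ} (with $c\equiv C$ chosen so that $f(x,t)\ge -Ct^{p-1}$ on $[0,\|\hat u_+\|_\infty]$) yields $\hat u_+\in\mathrm{int}(C_s^0(\overline\Omega)_+)$.

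\textbf{Monotone family and nontriviality of the limit.} For $\eps>0$ small, ${\bf H}_1$ \ref{h3} gives $\fpl(\eps\hat u_1)=\lambda_1(\eps\hat u_1)^{p-1}\le f(x,\eps\hat u_1)$, so $\eps\hat u_1$ is a subsolution; since $\hat u_+/\ds$ is bounded below away from zero on $\overline\Omega$ while $\hat u_1/\ds$ is bounded above, $\eps\hat u_1\le\hat u_+$ for $\eps$ small. Theorem \ref{SG} then yields $u_\eps:=\min\mathcal S(\eps\hat u_1,\hat u_+)$, and since $u_\eps$ itself lies in $\mathcal S(\eps'\hat u_1,\hat u_+)$ whenever $\eps'<\eps$, the family is non-decreasing in $\eps$. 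Uniform $C_s^\alpha(\overline\Omega)$ bounds and the $(S)_+$ property (Lemma \ref{S+}) produce a limit $u_+\in\w$ of $u_{\eps_n}$ along any $\eps_n\downarrow 0$, with $0\le u_+\le\hat u_+$ solving \eqref{p}. The main obstacle is proving $u_+\not\equiv 0$. Suppose it vanishes and set $v_n=u_{\eps_n}$, $w_n=v_n/\|v_n\|_{s,p}$: by homogeneity $\fpl w_n=c_n(x)w_n^{p-1}$ with $c_n(x)=f(x,v_n)/v_n^{p-1}$. Since $v_n\to 0$ uniformly, ${\bf H}_1$ \ref{h3} supplies $\mu>\lambda_1$ with $c_n\ge\mu$ eventually and a uniform $L^\infty$ upper bound. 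Along a subsequence $c_n\to\ell$ weakly-$*$ in $L^\infty(\Omega)$ with $\ell\ge\mu$, and $w_n\to w$ strongly in $\w$ by Lemma \ref{S+} (check $\langle\fpl w_n,w_n-w\rangle\to 0$ via boundedness of $c_n w_n^{p-1}$ in $L^{p'}(\Omega)$ and $w_n\to w$ in $L^p(\Omega)$). Passing to the limit, $\fpl w=\ell(x)w^{p-1}$ with $\|w\|_{s,p}=1$ and $w\ge 0$, so $w$ is a positive eigenfunction of \eqref{p4} with weight $\ell$ and eigenvalue $1$. Lemma \ref{sp} \ref{sp1} then forces $\lambda_1(\ell)=1$, while Lemma \ref{sp} \ref{sp3} applied to the constant weight $\lambda_1<\ell$ gives $\lambda_1(\ell)<1$ (since $\lambda_1$ of the constant weight $\lambda_1$ equals $1$), a contradiction. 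Hence $u_+\not\equiv 0$, and Lemma \ref{DQ} places $u_+\in\mathrm{int}(C_s^0(\overline\Omega)_+)$.

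\textbf{Minimality.} For any positive solution $v$ of \eqref{p}, Lemma \ref{DQ} applied to $v$ gives $v\ge c\hat u_1$ for some $c>0$. Choose $\eps_n$ from the sequence above with $\eps_n\le c$; then $w:=v\land u_{\eps_n}$ satisfies $w\ge\eps_n\hat u_1$ and is a supersolution by Lemma \ref{Mm}. Lemma \ref{Ex} yields $z\in\mathcal S(\eps_n\hat u_1,w)\subseteq\mathcal S(\eps_n\hat u_1,\hat u_+)$, whence minimality of $u_{\eps_n}$ forces $u_{\eps_n}\le z\le w\le v$. Passing to the limit gives $u_+\le v$, completing the proof.
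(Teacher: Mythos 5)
Your proposal follows essentially the same route as the paper: minimize the truncated functional $\Phi_+$ to produce a positive supersolution $\hat u_+\in\mathrm{int}(C_s^0(\overline\Omega)_+)$, use Lemma \ref{DQ} and ${\bf H}_1$\ref{h3} to set up the sub-supersolution pairs $(\eps\hat u_1,\hat u_+)$, extract the monotone family $u_\eps=\min\mathcal S(\eps\hat u_1,\hat u_+)$ via Theorem \ref{SG}, pass to the limit, rule out $u_+\equiv 0$ through the weighted eigenvalue problem and Lemma \ref{sp}, and establish minimality via Lemma \ref{Mm}. The only notable (and entirely correct) deviation is in the nontriviality step, where you pass the normalized sequence $w_n$ and weights $c_n$ to a limit eigenpair $(w,\ell)$ before invoking Lemma \ref{sp}\ref{sp1}--\ref{sp3}, whereas the paper reaches the same contradiction more directly at the level of each fixed $n$: since $\rho_n\ge c_1>\lambda_1$ pointwise, Lemma \ref{sp}\ref{sp3} gives $\lambda_1(\rho_n)<1$, so the nonnegative $v_n$ would be a non-principal eigenfunction and hence, by Lemma \ref{sp}\ref{sp2}, nodal — no limit passage needed.
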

\begin{proof}
We focus on positive solutions. Set for all $(x,t) \in \Omega \times \R$
\[f_+(x,t)=f(x,t^+), \quad   F_+(x,t)= \int_0^t  f_+(x,\tau)\,d\tau,\]
and for all $u \in \w$
\[\Phi_{+}(u)= \frac{\|u\|_{s,p}^p}{p} - \int_{\Omega} F_+(x,u)\,dx.\]
Since $f_+(x,t)=0$ for all $(x,t) \in \Omega \times \R^{-}$, $f_+$ satisfies ${\bf H}_1$ (with $t \to 0^+$ in \ref{h3}). Therefore, $\Phi_{+} \in C^1(\w)$. By ${\bf H}_1$ \ref{h1} and the compact embedding $\w \hookrightarrow L^q(\Omega)$, it is easily seen that $\Phi_{+}$ is sequentially weakly lower semicontinuous in $\w$.
\vskip2pt
\noindent
By ${\bf H}_1$ \ref{h2} there exist $\theta\in(0,\lambda_1)$, $K>0$ s.t.\ for a.e.\ $x\in\Omega$ and all $|t|\ge K$
\[F_+(x,t) \le \frac{\theta}{p}|t|^p.\]
Besides, by ${\bf H_1}$ \ref{h1} we can find $C_K>0$ s.t.\ for a.e.\ $x\in\Omega$ and all $t\in\R$
\[F_+(x,t) \le \frac{\theta}{p}|t|^p+C_K.\]
So, for all $u\in\w$ we have
\begin{align*}
\Phi_+(u) &\ge \frac{\|u\|_{s,p}^p}{p}-\int_\Omega\Big(\frac{\theta}{p}|u|^p+C_K\Big)\,dx \\
&\ge \frac{\|u\|_{s,p}^p}{p}-\frac{\theta}{p}\|u\|_p^p-C_K|\Omega| \\
&\ge \Big(1-\frac{\theta}{\lambda_1}\Big)\frac{\|u\|_{s,p}^p}{p}-C_K|\Omega|
\end{align*}
(where we used Lemma \ref{sp}), and the latter tends to infinity as $\|u\|_{s,p}\to \infty$. Therefore $\Phi_{+}$ is coercive. Thus, there is $\hat{u} \in \w$ s.t.\ 
\begin{equation}
\Phi_{+}(\hat{u})=\inf_{u \in \w} \Phi_{+}(u).
\label{min}
\end{equation}
In particular, we have $\Phi'_{+}(\hat{u})=0$, i.e.,
\begin{equation}
\fpl\hat{u}=f_+(\cdot, \hat{u}) \text{ in } W^{-s,p'}(\Omega).
\label{fd}
\end{equation}
Testing \eqref{fd} with $-\hat{u}^{-} \in \w$, we get 
\[\|\hat{u}^{-}\|^p \le - \langle \fpl\hat{u},\hat{u}^-\rangle= -\int_{\Omega} f_+(x,\hat{u})\hat{u}^-\,dx=0,\]
so $\hat{u} \ge 0$. Hence, $f_{+}(\cdot,\hat{u})=f(\cdot,\hat{u})$, therefore \eqref{fd} rephrases as
\[\fpl(\hat{u})=f(\cdot, \hat{u}) \text{ in } W^{-s,p'}(\Omega),\]
i.e., $\hat{u}\in\w_+$ is a solution of \eqref{p}. By Lemmas \ref{bound}, \ref{reg} we have $\hat{u} \in C_s^0(\overline{\Omega})_{+}$. By ${\bf H}_1$ \ref{h3}, we can find $\lambda_1 < c_1 < c_2$, $\delta>0$ s.t.\ for a.e.\ $x\in\Omega$ and all $t\in [0,\delta]$
\begin{equation}
c_1 t^{p-1} \le f(x,t) \le c_2 t^{p-1} .
\label{st}
\end{equation}
Choose $\tau>0$ s.t.\ $0<\tau \hat{u}_1 \le \delta$ in $\Omega$. Then by \eqref{min}, \eqref{st}, and Lemma \ref{sp} we have
\begin{align*}
\Phi_{+}(\hat{u}) &\le \Phi_{+}(\tau \hat{u}_1) \\
&=  \frac{\tau^p}{p} \|\hat{u}_1\|_{s,p}^p - \int_{\Omega} F_{+}(x, \tau \hat{u}_1)\,dx \\
&\le \frac{\tau^p}{p} \|\hat{u}_1\|_{s,p}^p - \frac{\tau^p c_1}{p} \|\hat{u}_1\|_p^p \\
&= \frac{\tau^p}{p} (\lambda_1 -c_1) <0,
\end{align*}
hence $\hat{u} \neq 0$. By \eqref{fd}, \eqref{st} we have for all $v \in \w_+$
\begin{align*}
\langle \fpl\hat{u}, v\rangle &\ge \int_{\{\hat{u} \le \delta\}} c_1 \hat{u}^{p-1} v\,dx - \int_{\{\hat{u} > \delta\}} c_0 (1+\hat{u}^{q-1}) v\,dx \\
&\ge \int_{\Omega} c_1 \hat{u}^{p-1} v\,dx - c_0 \int_{\{\hat{u} > \delta\}} \left[\frac{1}{\delta^{p-1}} + \|\hat{u}\|_{\infty}^{q-p} \right] \hat{u}^{p-1}v\,dx\\
&\ge -C  \int_{\Omega} \hat{u}^{p-1} v\,dx
\end{align*}
for some $C>0$. By Lemma \ref{DQ} and \eqref{int} we have $\hat{u} \in \mathrm{int}(C_s^0(\overline{\Omega})_+)$, so there is $r>0$ s.t.\ $u \in C_s^0(\overline{\Omega})_+$ for all $u \in C_s^0(\overline{\Omega})$ with $\|u-\hat{u}\|_{0,s} < r$. Now pick
\begin{equation}
0 < \eps < \min \Big\{\frac{\delta}{\|\hat{u}\|_{\infty}}, \frac{r}{\|\hat{u}_1\|_{0,s}}\Big\}.
\label{ep}
\end{equation} 
By \eqref{st} we have for all $v\in \w_+$
\[\langle \fpl(\eps \hat{u}_1), v\rangle = \lambda_1 \int_{\Omega}  (\eps \hat{u}_1)^{p-1} v\,dx \le \int_{\Omega}  f(x,\eps \hat{u}_1) v\,dx,\]
hence $\eps \hat{u}_1$ is a subsolution of \eqref{p}. Besides,
\[\|(\hat{u}- \eps \hat{u}_1) - \hat{u}\|_{0,s} = \eps \|\hat{u}_1\|_{0,s} < r,\]
so $\hat{u}- \eps \hat{u}_1 \in C_s^0(\overline{\Omega})_+$, in particular $\eps \hat{u}_1 \le \hat{u}$. Therefore $(\eps \hat{u}_1, \hat{u})$ is a sub-supersolution pair of \eqref{p}.
\vskip2pt
\noindent
For all $n \in \N$ big enough, $\eps = \frac{1}{n}$ satisfies \eqref{ep}. By Theorem \ref{SG}, there exists
\[u_n = \min \mathcal{S}\Big(\frac{\hat{u}_1}{n}, \hat{u}\Big).\]
Clearly $(0,\hat{u})$ is a sub-supersolution pair of \eqref{p} and $u_n\in\mathcal{S}(0,\hat{u})$, so by Lemma \ref{comp}, passing if necessary to a subsequence, we have $u_n\to u_+$ in $\w$ for some $u_+\in\mathcal{S}(0,\hat{u})$.
\vskip2pt
\noindent
On the other hand we have for all $n\in\N$
\[\mathcal{S}\Big(\frac{\hat{u}_1}{n},\hat{u}\Big) \subseteq \mathcal{S}\Big(\frac{\hat{u}_1}{n+1},\hat{u}\Big),\]
hence by minimality $u_{n+1}\le u_n$. This in turn implies that $u_n(x)\to u_+(x)$ for a.e.\ $x\in\Omega$. Now, since $0\le u_n\le\hat{u}$, we see that $(u_n)$ is a bounded sequence in $L^\infty(\Omega)$, hence by ${\bf H}_1$ \ref{h1} $(f(\cdot,u_n))$ is uniformly bounded as well. Then, since for all $n\in\N$
\beq\label{eqn}
\fpl u_n = f(\cdot,u_n) \text{ in } W^{-s,p'}(\Omega),
\eeq
Lemmas \ref{bound}, \ref{reg} imply that $(u_n)$ is bounded in $C_s^\alpha(\overline\Omega)$ as well. So, passing to a further subsequence, we have $u_n\to u_+$ in $C_s^0(\overline\Omega)$.
\vskip2pt
\noindent
We prove now that $u_+ \neq 0$, by contradiction. If $u_+ = 0$, then $u_n \to 0$ uniformly in $\overline{\Omega}$. Set
\[v_n = \frac{u_n}{\|u_n\|_{s,p}} \in \w_+,\]
then by \eqref{eqn} we have for all $n \in \N$
\[\fpl v_n = \frac{f(\cdot, u_n)}{\|u_n\|_{s,p} ^{p-1}} = \frac{f(\cdot,u_n)}{u_n^{p-1}} v_n^{p-1} \text{ in } W^{-s,p'}(\Omega).\]
Set for all $n\in\N$
\[\rho_n = \frac{f(\cdot,u_n)}{u_n^{p-1}},\]
By \eqref{st}, for $n\in\N$ big enough we have $c_1 \le \rho_n \le c_2$ in $\Omega$, in particular $\rho_n \in L^{\infty}(\Omega)$.
Then $v_n \in \w \setminus \{0\}$ is an eigenfunction of the \eqref{p4}-type eigenvalue problem
\begin{equation}
\fpl v_n = \lambda \rho_n v_n^{p-1} \text{ in } W^{-s,p'}(\Omega),
\label{wep}
\end{equation}
associated with the eigenvalue $\lambda =1$. Since $\rho_n \ge c_1 > \lambda_1$, by Lemma \ref{sp} \ref{sp3} we have
\[\lambda_1 (\rho_n) < \lambda_1 (\lambda_1) = 1,\]
therefore $v_n$ is a non-principal eigenfunction of \eqref{wep}. By Lemma \ref{sp} \ref{sp2} $v_n$ is nodal, a contradiction.
Hence, by Lemma \ref{DQ} and \eqref{int} we have $u_{+} \in \mathrm{int}(C_s^0(\overline{\Omega})_+)$.
\vskip2pt
\noindent
Finally, we prove that $u_+$ is the smallest positive solution of \eqref{p}. Let $u \in \w_+ \setminus \{0\}$ be a solution of \eqref{p}. Arguing as above we see that 
$u \in \mathrm{int}(C_s^0(\overline{\Omega})_+)$. Set $w=u \land \hat{u} \in \w_+$, then by Lemma \ref{Mm} $w$ is a supersolution of \eqref{p}. As above, for all $n \in\N$ big enough we have that $\frac{\hat{u}_1}{n}$ is a subsolution of \eqref{p} and $\frac{\hat{u}_1}{n} \le w$ in $\Omega$, i.e., $(\hat{u}_1/n,w)$ is a sub-supersolution pair. Therefore, by Lemma \ref{Ex} we can find 
\[w_n \in \mathcal{S}\Big(\frac{\hat{u}_1}{n}, w\Big).\]
Since
\[\mathcal{S}\Big(\frac{\hat{u}_1}{n}, w\Big) \subseteq \mathcal{S}\Big(\frac{\hat{u}_1}{n}, \hat{u}\Big),\]
by minimality, for all $n\in\N$ big enough we have $u_n \le w_n$, hence $u_n \le u$. Passing to the limit as $n \to \infty$, we have $u_+ \le u$.
\vskip2pt
\noindent
Similarly we prove existence of the biggest negative solution $u_{-} \in -\mathrm{int}(C_s^0(\overline{\Omega})_+)$.
\end{proof}

\begin{remark}
According to \cite{HS}, most properties on Lemma \ref{sp} also hold if $\rho$ lies in a special class $\widetilde{W}_p$ of {\em singular} weights, namely if $\rho {\rm d}_{\Omega}^{sa} \in L^r(\Omega)$ for some $a \in [0,1]$, $r>1$ satisfying 
\[\frac{1}{r} + \frac{a}{p} + \frac{p-a}{p_s^*} < 1.\]
So, in view of the proof of Theorem \ref{main} above, a natural question is whether we may replace ${\bf H}_1$ \ref{h3} with the weaker condition
\[ \liminf_{t\to 0}\frac{f(x,t)}{t^{p-1}} > \lambda_1 \text{ uniformly for a.e.\ } x\in\Omega.\]
Define $\rho_n=f(\cdot,u_n)/u_n^{p-1}$ as above, then recalling that $u_n\ge c\ds$ in $\overline\Omega$ we have
\[ 0 < \rho_n \le C (1+ d_{\Omega}^{-s(p+1)}).\]
Unfortunately, this {\em does not} ensure that $\rho_n\in\widetilde{W}_p$, in general. For instance, consider the case $\Omega=B_1(0)$, ${\rm d}_\Omega(x)=1-|x|$. Then we have ${\rm d}_\Omega^s\in L^\alpha(\Omega)$ iff $\alpha\in (0,1)$. Therefore, $\rho_n \in \widetilde{W}_p$ implies
\[\begin{cases}
sr(p-a-1) <1 & \text{} \\
\displaystyle\frac{1}{r} + \frac{a}{p} + \frac{p-a}{p_s^*} < 1, & \text{}
\end{cases}\]
in particular $(p-2)s< 1$. Yet, for special values of $p$, $s$, and a suitable domain $\Omega$, analogues to Theorem \ref{main} could be proved for reactions $f(x,\cdot)$ with a $(p-1)$-sublinear behavior near the origin.
\end{remark}

\section{Nodal solutions}\label{sec5}

\noindent
In this section we present an application of our main result, following the ideas of \cite{FP} (see also \cite[Theorem 11.26]{MMP}). Applying Theorem \ref{main}, along with the mountain pass theorem and spectral theory for $\fpl$, we prove existence of a nodal solution of \eqref{p}. Our hypotheses on the reaction $f$ are the following:
\begin{itemize}[leftmargin=1cm]
\item[${\bf H}_2$] $f:\Omega\times\R\to\R$ is a Carath\'{e}odory function, for all $(x,t)\in\Omega\times\R$ we set
\[F(x,t)=\int_0^t  f(x,\tau)\,d\tau,\]
and the following conditions hold:
\begin{enumroman}
\item\label{h_1} $|f(x,t)| \leq c_0(1+|t|^{q-1})$ for all a.e.\ $x\in\Omega$ and all $t\in\R$ ($c_0>0$, $q \in(p,p_s^*)$);
\item\label{h_2} $\displaystyle\limsup_{|t|\to\infty} \frac{F(x,t)}{|t|^p}<\frac{\lambda_1}{p}$ uniformly for a.e.\ $x\in\Omega$;
\item\label{h_3} $\displaystyle\lambda_2 < \liminf_{t\to 0}\frac{f(x,t)}{|t|^{p-2}t} \le \limsup_{t\to 0}\frac{f(x,t)}{|t|^{p-2}t} < \infty$ uniformly for a.e.\ $x\in\Omega$.
\end{enumroman}
\end{itemize}
Here $\lambda_2 > \lambda_1$ denotes the second (variational) eigenvalue of $\fpl$ in $\w$, defined by \eqref{e2}. Again, we are assuming for $f(x,\cdot)$ a $(p-1)$-linear behavior near the origin.
\vskip2pt
\noindent
Our method is variational. We define the energy functional $\Phi$ as in Section \ref{sec1} and recall the following \emph{Palais-Smale compactness condition}:
\begin{enumerate}[label=(PS)]
\item\label{ps} Any sequence $(u_n)_n$ in $\w$, s.t.\ $(\Phi(u_n))$ is bounded in $\R$ and $\Phi'(u_n)\rightarrow 0$ in $W^{-s,p'}$, admits a (strongly) convergent subsequence.
\end{enumerate}
We will use the following notation for critical points:
\[K(\Phi) = \big\{u\in\w:\,\Phi'(u)=0 \ \text{in} \ W^{-s,p'}(\Omega)\big\}\,\]
and for all $c\in\R$
\[K_c(\Phi) = \big\{u\in K(\Phi):\,\Phi(u)=c\big\}.\]
Our result is the following:

\begin{theorem}\label{nod}
Let ${\bf H}_2$ hold. Then, \eqref{p} has a smallest positive solution $u_{+} \in \mathrm{int}(C_s^0(\overline{\Omega})_+)$, a biggest negative solution 
$u_{-} \in -\mathrm{int}(C_s^0(\overline{\Omega})_+)$, and a nodal solution $\tilde{u} \in C_s^0(\overline{\Omega})$ s.t.\ $u_{-} \le \tilde{u} \le u_{+}$ in $\Omega$.
\end{theorem}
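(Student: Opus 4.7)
Since $\lambda_2>\lambda_1$, hypothesis ${\bf H}_2$ implies ${\bf H}_1$, so Theorem \ref{main} provides the extremal constant-sign solutions $u_\pm\in\pm\mathrm{int}(C_s^0(\overline\Omega)_+)$. I would obtain $\tilde u$ as a third critical point (sitting between $u_-$ and $u_+$) of the two-sided truncated functional
\[
\hat\Phi(u)=\frac{\|u\|_{s,p}^p}{p}-\int_\Omega\hat F(x,u)\,dx,\qquad \hat f(x,t):=f\bigl(x,\,u_-(x)\vee(t\wedge u_+(x))\bigr),
\]
with $\hat F$ the primitive of $\hat f$. Since $u_\pm\in L^\infty(\Omega)$, $\hat f$ is bounded, so $\hat\Phi\in C^1(\w)$ is coercive and sequentially weakly l.s.c.; together with Lemma \ref{S+} this yields the Palais--Smale condition. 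Testing with $(u-u_+)^+$ and $(u_--u)^+$ as in the proof of Lemma \ref{Ex}, any critical point of $\hat\Phi$ lies in $[u_-,u_+]$ and hence solves \eqref{p}.

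\textbf{Local minima.}
To set up the mountain pass geometry, I would show $u_+$ (and symmetrically $u_-$) is a $\w$-local minimum of $\hat\Phi$. Consider the one-sided truncation
\[
\hat\Phi_+(u)=\frac{\|u\|_{s,p}^p}{p}-\int_\Omega\hat F_+(x,u)\,dx,\qquad \hat f_+(x,t):=f\bigl(x,\,0\vee(t\wedge u_+(x))\bigr).
\]
The same comparison confines critical points of $\hat\Phi_+$ to $[0,u_+]$, and by extremality of $u_+$ they reduce to $\{0,u_+\}$. Since $\hat\Phi_+(\eps\hat u_1)<0$ for $\eps>0$ small (from ${\bf H}_2$~\ref{h_3} with $\lambda_2>\lambda_1$), $u_+$ is the global minimum of $\hat\Phi_+$, and $\hat\Phi(u_+)=\hat\Phi_+(u_+)<0$. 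For $v\in\w$ close to $u_+$ in $C_s^0(\overline\Omega)$ one has $v\ge 0$ in $\Omega$ (as $u_+/\ds$ is bounded below on $\overline\Omega$), hence $\hat\Phi(v)=\hat\Phi_+(v)\ge\hat\Phi_+(u_+)=\hat\Phi(u_+)$; Lemma \ref{svh} upgrades this to a $\w$-local minimum.

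\textbf{Mountain pass with negative ceiling.}
A three-critical-point theorem (e.g.\ Pucci--Serrin) applied to the local minima $u_\pm$ of $\hat\Phi$ under (PS) yields a third critical point $\tilde u\ne u_\pm$ at the MP level
\[
c:=\inf_{\gamma\in\Gamma}\max_{t\in[0,1]}\hat\Phi(\gamma(t)),\qquad \Gamma:=\{\gamma\in C([0,1],\w):\gamma(0)=u_-,\gamma(1)=u_+\}.
\]
Since $\hat\Phi(0)=0$, showing $c<0$ secures $\tilde u\ne 0$. I would construct such a $\gamma$ as a concatenation of three arcs. Arc (I) from $u_+$ to $\eps\hat u_1$: the only critical values of $\hat\Phi_+$ are $\hat\Phi_+(u_+)<0$ and $0$, so the second deformation lemma contracts $\{\hat\Phi_+\le\hat\Phi_+(\eps\hat u_1)\}$ onto $\{u_+\}$, and post-composition with $u\mapsto u^+$ (which does not increase $\hat\Phi_+$) keeps the path in $\{v\ge 0\}$, where $\hat\Phi=\hat\Phi_+<0$. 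Arc (II) from $\eps\hat u_1$ to $-\eps\hat u_1$: pick $\gamma_0\in\Gamma_1$ with $\max_t\|\gamma_0(t)\|_{s,p}^p<\lambda_2+\eta$ via \eqref{e2}; after approximating $\gamma_0$ by a path in $L^\infty\cap\w$ with a uniform $L^\infty$-bound, $\eps\gamma_0(t)\in[u_-,u_+]$ pointwise for $\eps$ small, and ${\bf H}_2$~\ref{h_3} supplies $c_1>\lambda_2$ with $F(x,t)\ge\frac{c_1}{p}|t|^p$ near $0$, so that
\[
\hat\Phi(\eps\gamma_0(t))\le\frac{\eps^p(\lambda_2+\eta-c_1)}{p}<0
\]
for $\eta<c_1-\lambda_2$. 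Arc (III) from $-\eps\hat u_1$ to $u_-$ is symmetric to (I). The concatenation shows $c<0$, whence $\tilde u\ne 0$; the truncation gives $u_-\le\tilde u\le u_+$, and extremality of $u_\pm$ prevents $\tilde u$ from being of constant sign, so $\tilde u$ is nodal, with $\tilde u\in C_s^0(\overline\Omega)$ by Lemmas \ref{bound} and \ref{reg}.

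\textbf{Main obstacle.}
Arc (II) is the technical heart: \eqref{e2} only provides paths in $\w$, whose elements need not be uniformly bounded, so the scaled path cannot be directly squeezed pointwise into $[u_-,u_+]$. A density/approximation step (e.g.\ truncation plus renormalization, or deformation along the $(S)_+$ gradient flow) is needed to replace $\gamma_0$ by a path in $L^\infty\cap\w$ (or $C_s^0\cap\w$) with uniform bounds, still almost realizing $\lambda_2$; only then does the lower bound on $F$ near $0$ from ${\bf H}_2$~\ref{h_3} deliver the strictly negative ceiling on the MP value.
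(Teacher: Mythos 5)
Your proposal reproduces the paper's proof essentially step by step: same two-sided and one-sided truncations, same use of Lemma \ref{svh} to upgrade the $C_s^0(\overline\Omega)$-local minima to $\w$-local minima, same mountain-pass argument with a three-arc path (second deformation theorem for the outer arcs, $\lambda_2$-characterization \eqref{e2} for the middle arc), and same extremality argument to conclude that $\tilde u$ is nodal. You also correctly flagged the lack of uniform $L^\infty$-bounds on a $\Gamma_1$-path as the main technical obstacle; the paper disposes of it exactly as you suggest, replacing $\gamma_1$ by density with a path in $\Sigma$ continuous in the $C_s^0(\overline\Omega)$-norm (citing \cite{DI}), after which the pointwise squeeze into $[u_-,u_+]$ goes through.
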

\begin{proof}
Clearly ${\bf H}_2$ implies ${\bf H}_1$. From Theorem \ref{main}, then, we know that \eqref{p} has a smallest positive solution $u_{+} \in \mathrm{int}(C_s^0(\overline{\Omega})_+)$ and a biggest negative solution $u_{-} \in -\mathrm{int}(C_s^0(\overline{\Omega})_+)$. Plus, by  ${\bf H}_2$ \ref{h_3}, $0$ is a solution of \eqref{p}. We are going to detect a fourth solution $\tilde u\in\w$, and then show that it is nodal.
\vskip2pt
\noindent
Set for all $(x,t) \in \Omega \times \R$
\[\tilde{f}(x,t)=
\begin{cases}
f(x,u_{-}(x)) \ & \text{ if } t < u_{-}(x) \\
f(x,t) \ & \text{ if } u_{-}(x) \le t \le u_{+}(x) \\
f(x, u_{+}(x)) & \text{ if } t > u_{+}
\end{cases}\]
and
\[\tilde{F}(x,t)= \int_0^t  \tilde{f}(x,\tau)\,d\tau.\]
Since $u_{\pm} \in L^{\infty}(\Omega)$, $\tilde{f}$ satisfies ${\bf H}_0$. Now set for all $u \in \w$
\[\tilde{\Phi}(u)= \frac{\|u\|_{s,p}^p}{p} - \int_{\Omega} \tilde{F}(x,u)\,dx.\]
By ${\bf H}_2$ \ref{h_1} \ref{h_2}, reasoning as in the proof of Theorem \ref{main} we see that $\tilde{\Phi} \in C^1(\w)$ is coercive. As a consequence, $\tilde{\Phi}$ satisfies \ref{ps} (see \cite[Proposition 2.1]{ILPS}). 
Whenever $u \in \w$ is a critical point of $\tilde{\Phi}$, then for all $v\in \w$
\begin{equation}
\langle \fpl u, v \rangle = \int_{\Omega} \tilde{f}(x,u)v\,dx. 
\label{wf}
\end{equation}
By Lemmas \ref{bound}, \ref{reg} we have $u \in C_s^0(\overline{\Omega})$. Besides, testing \eqref{wf} with $(u-u_+)^+, -(u-u_{-})^{-} \in \w$ and arguing as in Lemma \ref{Ex} we have $u_{-} \le u \le u_{+}$ in $\Omega$, hence $u$ solves \eqref{p} in $\Omega$. Using the notation of Section \ref{sec3}, we can say that $u\in\mathcal{S}(u_-,u_+)$.
\vskip2pt
\noindent
We introduce a further truncation setting for all $(x,t) \in \Omega \times \R$
\[\tilde{f}_{+}(x,t)=\tilde{f}(x, t^{+}), \quad \tilde{F}_{+}(x,t)= \int_0^t  \tilde{f}_{+}(x,\tau)\,d\tau,\]
and for all $u\in \w$
\[\tilde{\Phi}_{+}(u)= \frac{\|u\|_{s,p}^p}{p} - \int_{\Omega} \tilde{F}_{+}(x,u)\,dx.\]
Reasoning as above, we see that $\tilde{\Phi}_{+} \in C^1(\w)$ is coercive, and whenever $u\in\w$ is a critical point of $\tilde{\Phi}_+$ we have $u\in\mathcal{S}(0,u_+)$. By the compact embedding $\w\hookrightarrow L^q(\Omega)$, it is easily seen that $\tilde{\Phi}_+$ is sequentially weakly lower semicontinuous, hence there exists $\tilde{u}_{+} \in \w$ s.t.\  
\[\tilde{\Phi}_{+}(\tilde{u}_{+})=\inf_{u \in \w} \tilde{\Phi}_{+}(u).\]
Arguing as in Theorem \ref{main} we see that $\tilde{\Phi}_{+}(\tilde{u}_{+})<0$, hence $\tilde{u}_{+} \neq 0$. By ${\bf H}_2$ \ref{h_3} and Lemma \ref{DQ}, we have $\tilde{u}_{+} \in \mathrm{int}(C_s^0(\overline{\Omega})_+)$. So, $\tilde{u}_{+}$ is a positive solution of \eqref{p}, hence the minimality of $u_{+}$ implies $\tilde{u}_{+} =u_{+}$. In particular, since $\tilde{\Phi}=\tilde{\Phi}_+$ in $C_s^0(\overline\Omega)_+$, we see that $u_{+} \in \mathrm{int}(C_s^0(\overline{\Omega})_+)$ is a local minimizer of $\tilde{\Phi}$ in $C_s^0(\overline{\Omega})$. By Lemma \ref{svh}, then $u_+$ is a local minimizer of $\tilde{\Phi}$ in $\w$ as well (recall that $\tilde f$ sarisfies ${\bf H}_0$).
\vskip2pt
\noindent
Similarly we prove that $u_{-} \in -\mathrm{int}(C_s^0(\overline{\Omega})_+)$ is a local minimizer of $\tilde{\Phi}$.
\vskip2pt
\noindent
Now we argue by contradiction, assuming that there are no other critical points of $\tilde{\Phi}$ than $0$, $u_+$, and $u_-$, namely,
\begin{equation}
K(\tilde{\Phi})=\{0, u_{+}, u_{-}\}.
\label{cp}
\end{equation}
In particular, both $u_{\pm}$ are strict local minimizers of $\tilde{\Phi}$, which satisfies \ref{ps}. By the mountain pass Theorem \cite[Proposition 5.42]{MMP}, there exists $\tilde{u} \in K_c(\tilde{\Phi})$, where we have set
\[\Gamma=\big\{\gamma \in C([0,1],\w): \gamma(0)=u_{+}, \gamma(1)=u_{-}\big\},\]
and
\[c=\inf_{\gamma \in \Gamma} \max_{t \in [0,1]} \tilde{\Phi}(\gamma(t)) > \max\big\{\tilde{\Phi}(u_{+}), \tilde{\Phi}(u_{-})\big\}.\]
In particular $\tilde{u}\neq u_\pm$, which by \eqref{cp} implies $\tilde{u}=0$ and hence $c=0$. Set 
\[\Sigma=\{u \in \w \cup C_s^0(\overline{\Omega}): \|u\|_p=1\}.\]
By ${\bf H}_2$ \ref{h_3} we can find $\mu > \lambda_2$, $\delta >0$ s.t.\ for all $x \in \Omega$, $|t| \le \delta$
\[F(x,t) \ge \frac{\mu}{p} |t|^p.\]
By \eqref{e2} there is $\gamma_1 \in \Gamma_1$ s.t.\
\[\max_{t \in [0,1]} \|\gamma_1(t)\|_{s,p}^p < \mu,\]
and by density we may assume $\gamma_1 \in C([0,1], \Sigma)$, continuous with respect to the $C_s^0(\overline{\Omega})$-norm (see \cite{DI} for details). Since $t\mapsto\|\gamma_1(t)\|_\infty$ is bounded in $[0,1]$, we can find $\eps>0$ s.t.\ $\|\eps\gamma_1(t)\|_\infty\le\delta$ for all $t\in[0,1]$.
\vskip2pt
\noindent
Besides, taking $\eps>0$ even smaller if necessary, we have for all $t\in[0,1]$
\[u_{+} - \eps_t \gamma_1(t) \in \mathrm{int}(C_s^0(\overline{\Omega})_+), \quad u_{-} - \eps_t \gamma_1(t) \in -\mathrm{int}(C_s^0(\overline{\Omega})_+),\]
in particular $u_{-} < \eps \gamma_1(t) < u_{+}$ a.e.\ in $\Omega$. So, for all $t\in[0,1]$ we get
\begin{align*}
\tilde{\Phi}(\eps \gamma_1(t)) &=  \frac{\eps^p}{p} \|\gamma_1(t)\|_{s,p}^p - \int_{\Omega} \tilde{F}(x,\eps \gamma_1(t))\,dx \\
&\le \frac{\eps^p}{p} \|\gamma_1(t)\|_{s,p}^p - \frac{\mu \eps^p}{p} \|\gamma_1(t)\|_{p}^p \\
&= \frac{\eps^p}{p} (\|\gamma_1(t)\|_{s,p}^p- \mu) < 0.
\end{align*}
Thus, $\eps\gamma_1$ is a continuous path joining $\eps\hat{u}_1$ to $-\eps\hat{u}_1$, s.t.\ for all $t\in[0,1]$
\[\tilde{\Phi}(\eps\gamma_1(t)) < 0.\]
Besides, by \eqref{cp} and Lemma \ref{DQ} we have 
\[K(\tilde{\Phi}_{+})=\{0, u_{+}\},\]
Set $a=\tilde{\Phi}_{+}(u_+)$, $b=\tilde{\Phi}_{+}(\eps \hat{u}_1)$, hence $a<b<0$ and there is no critical level in $(a,b]$. Therefore, by the second deformation theorem \cite[Theorem 5.34]{MMP} there exists a continuous deformation $h:[0,1] \times \{\tilde{\Phi}_{+} \le b\} \rightarrow \{\tilde{\Phi}_{+} \le b\}$ s.t.\ for all $t \in [0,1]$, $\tilde{\Phi}_{+}(u) \le b$
\[h(0,u)=u, \quad h(1,u)=u_{+}, \quad \tilde{\Phi}_{+}(h(t,u)) \le \tilde{\Phi}_{+}(u).\]
Set for all $t \in [0,1]$
\[\gamma_{+}(t)=h(t,\eps \hat{u}_1)^{+} \in \w_{+},\]
then $\gamma_{+} \in C([0,1], \w)$ with $\gamma_{+}(0)=\eps \hat{u}_1$, $\gamma(1)=u_{+}$, and for all $t \in [0,1]$
\[\tilde{\Phi}(\gamma_{+}(t)) \le b < 0.\]
Similarly we construct $\gamma_{-} \in C([0,1], \w)$ s.t.\ $\gamma_{-}(0)=-\eps \hat{u}_1$, $\gamma(1)=u_{-}$, and for all $t \in [0,1]$
\[\tilde{\Phi}(\gamma_{-}(t)) < 0.\]
Concatenating $\gamma_{+}, \eps \gamma_1, \gamma_{-}$ we find a path $\gamma \in \Gamma$ s.t.\ for all $t \in [0,1]$
\[\tilde{\Phi}(\gamma(t)) < 0,\]
hence $c<0$, a contradiction. So, \eqref{cp} is false, i.e., there exists $\tilde{u}\in K(\tilde\Phi)\setminus\{0,u_+,u_-\}$, so as ween above we have $\tilde{u}\in\mathcal{S}(u_-,u_+)$.
\vskip2pt
\noindent
Finally, we prove that $\tilde{u}$ is nodal. Indeed, if $\tilde{u}\in\w_+\setminus\{0\}$, then by Lemma \ref{DQ} we would have $\tilde{u}\in\mathrm{int}(C_s^0(\overline{\Omega})_+)$, along with $\tilde{u}\le u_+$, which, by Theorem \ref{main}, would imply $\tilde{u}=u_+$, a contradiction. Similarly we see that $\tilde{u}$ cannot be negative.
\vskip2pt
\noindent
Thus, $\tilde{u}\in C_s^0(\overline\Omega)\setminus\{0\}$ is a nodal solution of \eqref{p} s.t.\ $u_-\le\tilde{u}\le u_+$ a.e.\ in $\Omega$.
\end{proof}

\begin{remark}
The argument based on the characterization of $\lambda_2$ was already employed in \cite[Theorem 4.1]{IMS3} and \cite[Theorem 3.3]{DI} (for $p=2$). The novelty of Theorem \ref{nod} above, with respect to such results (even for the linear case $p=2$), lies in the detailed information about solutions, as we prove that $u_\pm$ are {\em extremal} constant sign solutions and $\tilde{u}$ is {\em nodal}. We also remark that the assumption $p\ge 2$ is essentially due to regularity theory (Lemma \ref{reg}), but the arguments displayed in this paper also work, with minor adjustments, for $p\in(1,2)$.
\end{remark}

\vskip4pt
\noindent
{\bf Acknowledgement.} Both authors are members of GNAMPA (Gruppo Nazionale per l'Analisi Matematica, la Probabilit\`a e le loro Applicazioni) of INdAM (Istituto Nazionale di Alta Matematica 'Francesco Severi'). A.\ Iannizzotto is supported by the grant PRIN n.\ 2017AYM8XW: {\em Non-linear Differential Problems via Variational, Topological and Set-valued Methods}, and by the research project {\em Integro-differential Equations and nonlocal Problems} funded by Fondazione di Sardegna (2017). We thank S.\ Mosconi for useful discussions.


\begin{thebibliography}{99}

\bibitem{AL}
{\sc A.\ Ambrosetti, D.\ Lupo,}
On a class of nonlinear Dirichlet problems with multiple solutions,
{\em Nonlinear Anal.} {\bf 8} (1984) 1145--1150.

\bibitem{BL}
{\sc T.\ Bartsch, Z.\ Liu,}
On a superlinear elliptic $p$-Laplacian equation,
{\em J. Differential Equations} {\bf 198} (2004) 149--175.

\bibitem{BL1}
{\sc L.\ Brasco, E.\ Lindgren,}
 Higher Sobolev regularity for the fractional $p$-Laplace equation in the superquadratic case.
{\em Adv. Math.} {\bf 304} (2017) 300--354.

\bibitem{BP}
{\sc L.\ Brasco, E.\ Parini},
The second eigenvalue of the fractional $p$-Laplacian,
{\em Adv. Calc. Var.} {\bf 9} (2016) 323--355.

\bibitem{BN}
{\sc H.\ Brezis, L.\ Nirenberg,}
$H^1$ versus $C^1$ local minimizers,
{\em C. R. Acad. Sci. Paris S\'er. I} {\bf 317} (1993) 465--472.

\bibitem{CS}
{\sc X.\ Cabr\'e, Y.\ Sire},
Nonlinear equations for fractional Laplacians I: Regularity, maximum principles, and Hamiltonian estimates,
{\em Ann. Inst. Henri Poincar\'e (C) Nonlinear Analysis} {\bf 31} (2014) 23--53.

\bibitem{CS1}
{\sc X.\ Cabr\'e, Y.\ Sire},
Nonlinear equations for fractional Laplacians II: Existence, uniqueness, and qualitative properties of solutions,
{\em Trans. Amer. Math. Soc.} {\bf 367} (2015) 911--941. 

\bibitem{CLM}
{\sc S.\ Carl, V. K.\ Le, D.\ Motreanu},
Nonsmooth variational problems and their inequalities,
Springer, New York (2007).

\bibitem{CP}
{\sc S.\ Carl, K.\ Perera,}
Sign-changing and multiple solutions for the $p$-Laplacian,
{\em Abstr. Appl. Anal.} {\bf 7} (2002) 613--625.

\bibitem{CMS}
{\sc W.\ Chen, S.\ Mosconi, M.\ Squassina,}
Nonlocal problems with critical Hardy nonlinearity,
{\em J. Funct. Anal.} {\bf 275} (2018) 3065--3114.

\bibitem{DD}
{\sc E.N.\ Dancer, Y.\ Du,}
On sign-changing solutions of certain semilinear elliptic problems,
{\em Appl. Anal.} {\bf 56} (1995) 193--206.
 
\bibitem{DQ}
{\sc L.M.\ Del Pezzo, A.\ Quaas,}
A Hopf's lemma and a strong minimum principle for the fractional $p$-Laplacian,
{\em J. Differential Eq.} {\bf 263} (2017) 765--778.

\bibitem{DQ1}
{\sc L.M.\ Del Pezzo, A.\ Quaas,}
Global bifurcation for fractional $p$-Laplacian and an application,
{\em Z. Anal. Anwend.} {\bf 35} (2016) 411--447.

\bibitem{DPV}
{\sc E.\ Di Nezza, G.\ Palatucci, E.\ Valdinoci},
Hitchhiker's guide to the fractional Sobolev spaces,
{\em Bull. Sci. Math.} {\bf 136} (2012) 521--573.

\bibitem{DI}
{\sc F.G.\ D\"uzg\"un, A.\ Iannizzotto,}
Three nontrivial solutions for nonlinear fractional Laplacian equations,
{\em Adv. Nonlinear Anal.} {\bf 7} (2018) 211--226.

\bibitem{FP}
{\sc M.E.\ Filippakis, N.S.\ Papageorgiou,}
Multiple constant sign and nodal solutions for nonlinear elliptic equations with the $p$-Laplacian,
{\em J. Differential Equations} {\bf 245} (2008) 1883--1922.

\bibitem{FP1}
{\sc G.\ Franzina, G.\ Palatucci},
Fractional $p$-eigenvalues,
{\em Riv. Mat. Univ. Parma} {\bf 5} (2014) 373--386.

\bibitem{FRS}
{\sc S.\ Frassu, E. M.\ Rocha, S.\ Santos},
The obstacle problem at $0$ for the fractional $p$-Laplacian,
preprint.

\bibitem{FP2}
{\sc Y.\ Fu, P.\ Pucci},
Multiplicity existence for sublinear fractional Laplacian problems,
{\em Appl. Anal.}  {\bf 96} (2017) 1497--1508.

\bibitem{GP}
{\sc L.\ Gasi\'{n}ski, N.S.\ Papageorgiou,}
Extremal, nodal and stable solutions for nonlinear elliptic equations,
{\em Adv. Nonlinear Stud.} {\bf 15} (2015) 629--665.

\bibitem{HS}
{\sc K.\ Ho, I.\ Sim},
Properties of eigenvalues and some regularities on fractional $p$-Laplacian with singular weights,
preprint arXiv:1809.07020.

\bibitem{ILPS}
{\sc A.\ Iannizzotto, S.\ Liu, K.\ Perera, M.\ Squassina},  
Existence results for fractional $p$-Laplacian problems via Morse theory,
{\em Adv. Calc. Var.} {\bf 9} (2016) 101--125.

\bibitem{IMS}
{\sc A.\ Iannizzotto, S.\ Mosconi, M.\ Squassina},
$H^s$ versus $C^0$-weighted minimizers,
{\em Nonlinear Differ. Equ. Appl.} {\bf 22} (2015) 477--497.

\bibitem{IMS1}
{\sc A.\ Iannizzotto, S.\ Mosconi, M.\ Squassina},
Global H\"older regularity for the fractional $p$-Laplacian,
{\em Rev. Mat. Iberoam.} {\bf 32} (2016) 1353--1392.

\bibitem{IMS2}
{\sc A.\ Iannizzotto, S.\ Mosconi, M.\ Squassina},
Fine boundary regularity for the degenerate fractional $ p $-Laplacian,
preprint arXiv:1807.09497.

\bibitem{IMS3}
{\sc A.\ Iannizzotto, S.\ Mosconi, M.\ Squassina},
Sobolev versus H\"{o}lder minimizers for the degenerate fractional $p$-Laplacian,
{\em Nonlinear Anal.} {\bf 191} (2020).

\bibitem{J}
{\sc S.\ Jarohs,}
Strong comparison principle for the fractional $p$-Laplacian and applications to starshaped rings,
{\em Adv. Nonlinear Stud.} {\bf 18} (2018) 691--704.

\bibitem{KMS}
{\sc T.\ Kuusi, G.\ Mingione, Y.\ Sire,}
Nonlocal equations with measure data,
{\em Comm. Math. Phys.} {\bf 337} (2015) 1317--1368.

\bibitem{KMS1}
{\sc T.\  Kuusi, G.\  Mingione, Y.\ Sire,}
Nonlocal self-improving properties,
{\em Anal. PDE} {\bf 8} (2015) 57--114.

\bibitem{LL}
{\sc E.\ Lindgren, P.\ Lindqvist,}
Fractional eigenvalues,
{\em Calc. Var. Partial Differential Equations} {\bf 49} (2014) 795--826.

\bibitem{MRS}
{\sc G.\ Molica Bisci, V.D.\ R\u{a}dulescu, R.\ Servadei,}
Variational methods for nonlocal fractional problems,
Cambridge University Press, Cambridge (2016).

\bibitem{MS}
{\sc S.\ Mosconi, M.\ Squassina},
Recent progresses in the theory of nonlinear nonlocal problems,
{\em Bruno Pini Mathematical Analysis Sem.} {\bf 7} (2016) 147--164.

\bibitem{MMP1}
{\sc D.\ Motreanu, V. V.\ Motreanu, N. S.\ Papageorgiou,} 
A unified approach for multiple constant sign solutions and nodal solutions,
{\em Adv. Differ. Equations} {\bf 12} (2007) 1363--1392.

\bibitem{MMP}
{\sc D.\ Motreanu, V. V.\ Motreanu, N. S.\ Papageorgiou,} 
Topological and variational methods with applications to nonlinear boundary value problems,
Springer, New York (2014).

\bibitem{P}
{\sc G.\ Palatucci,}
The Dirichlet problem for the $p$-fractional Laplace equation,
{\em Nonlinear Anal.} {\bf 177} (2018) 699--732.

\bibitem{RS}
{\sc X.\ Ros-Oton, J.\ Serra,}
The Dirichlet problem for the fractional Laplacian: regularity up to the boundary,
{\em J. Math. Pures Appl.} {\bf 101} (2014) 275--302.

\bibitem{RS1}
{\sc X.\ Ros-Oton, J.\ Serra,}
The extremal solution for the fractional Laplacian,
{\em Calc. Var. Partial Differential Equations} {\bf 50} (2014) 723--750.

\bibitem{U}
{\sc N. S.\ Ustinov,}
Multiplicity of positive solutions to the boundary-value problems for fractional Laplacians,
{\em J. Math. Sci.} {\bf 236} (2019) 236--446.

\bibitem{XZR}
{\sc M.\ Xiang, B.\ Zhang, V.D.\ R\u{a}dulescu,}
Superlinear Schr\"odinger-Kirchhoff  type problems involving the fractional $p$-Laplacian and critical exponent,
{\em Adv. Nonlinear Anal.} {\bf 9} (2020) 690--709.

\bibitem{ZL}
{\sc Z.\ Zhang, S.\ Li,}
On sign-changing and multiple solutions of the $p$-Laplacian,
{\em J. Funct. Anal.} {\bf 197} (2003) 447--468.

\end{thebibliography}
\end{document}